\documentclass[12pt,a4paper]{article}

\usepackage{latexsym}
\usepackage{amsfonts}
\usepackage{amssymb}
\usepackage{amsmath}
\usepackage{amsthm}
\usepackage{amscd}
\usepackage{mathrsfs}
\usepackage{enumerate}
\usepackage{slashed}
\usepackage{color}

\setcounter{tocdepth}{1}
\setlength{\parskip}{1ex plus0.5ex minus0.2ex}
\setlength{\textwidth}{16cm}
\setlength{\textheight}{22cm}
\setlength{\topmargin}{0cm}
\setlength{\oddsidemargin}{0cm}
\setlength{\evensidemargin}{0cm}

\sloppy

\newcommand{\Prim}{\mathrm{Prim}}
\newcommand{\pt}{\mathrm{pt}}
\newcommand{\Sppd}{\mathrm{Sppd}\:}

\numberwithin{equation}{section}
\newtheorem{dfn}{Definition}
\newtheorem{lem}{Lemma}
\newtheorem{prp}{Proposition}
\newtheorem{thm}{Theorem}
\newtheorem*{te*}{Theorem}

\newtheorem*{cor}{Corollary}
\newtheorem*{dfn*}{Definition}

\begin{document}

\title{Locally $C^*$ Algebras, $C^*$ Bundles and Noncommutative Spaces}
 \author{Michael Forger%
         \thanks{Partly supported by CNPq
                 (Conselho Nacional de Desenvolvimento
                  Cient\'{\i}fico e Tecno\-l\'o\-gico), Brazil; \newline
                 E-mail: \textsf{forger@ime.usp.br}}
         ~~and~~
        Daniel V.\ Paulino%
        \thanks{Supported by FAPESP
                (Funda\c{c}\~ao de Amparo \`a Pesquisa do
                 Estado de S\~ao Paulo), Brazil; \newline
                 E-mail: \textsf{daniel.paulino@desy.de}}
        }
\date{\small{II Institute for Theoretical Physics, \\ University of Hamburg \\ - \\
             Departamento de Matem\'atica Aplicada \\
	     Instituto de Matem\'atica e Estat\'{\i}stica \\
	     Universidade de S\~ao Paulo}}
\maketitle

\thispagestyle{empty}

\begin{abstract}
 \noindent
 This work provides a generalization of the Gelfand duality to the context of 
 noncommutative locally $C^*$ algebras. 
 Using a reformulation of a theorem proven by Dauns and Hofmann in the 60's we 
 show that every locally $C^*$ algebra can be realized as the algebra of continuous 
 sections of a $C^*$ bundle over a compactly generated topological space.
 This result is used then to show that on certain special cases locally $C^*$
 algebras can be used to define certain sheaves of locally $C^*$ algebras that, 
 inspired by the analogy with commutative geometry, we call noncommutative spaces.
 The last section provides some examples, motivated by mathematical physics, for 
 this definition of noncommutative space. Namely we show that every local net of $C^*$ 
 algebras defines a noncommutative space and, based on a loose generalization of 
 the original construction by Doplicher, Fredenhagen and Roberts, construct what 
 we propose to call a "locally covariant quantum spacetime".
\end{abstract}

\newpage

\section{Introduction}

Perhaps the most well know result from the theory of $C^*$ algebras is the Gelfand duality, 
\cite{GN}, a categorical equivalence between commutative $C^*$ algebras and locally compact 
topological spaces.

The interest in extending this idea to noncommutative algebras, that is, to interpret noncommutative 
$C^*$ algebras as "function algebras" over some sort of ``noncommutative spaces", can perhaps be traced 
back to the early origins of quantum physics, where noncommutative $C^*$ algebras play a prominent role.
The attempts to give a precise meaning to the expression ``noncommutative space" led to the development 
of many different theories, some going as far as to define new branches of mathematics, like Connes' 
notion of spectral triple, \cite{CN}. 

The end goal of many such approaches is the definition of geometric structures, such as differential 
structures, pseudo-riemannian metrics, spin structures, etc, over those noncommutative spaces. However 
one point that is consistently overlooked in the current literature is the fact that, even in the classical 
setting, most of those constructions rely on the idea of "localization" of the structures involved.

By "localization" we mean the fact that there is a clear and well-defined notion of "sub-region" for the
spaces and that, in general, the information about constructions at the large can be obtained by the analysis 
of their restriction to smaller regions.

This is, in fact, a guiding principle in differential geometry, which becomes clear in the intuitive idea behind
the notion of a manifold; a space which can be reduced to "small regions" which are "similar" to regions of a 
euclidean space. 

This idea of "localization" is encoded in the categorical notion of sheaf, and it is a well-know fact that most 
of the usual differential geometry can be defined entirely in terms of those.

Our main goal in this work is to provide a theory of noncommutative spaces, in the sense of a generalization 
of Gelfand duality to noncommutative algebras, which incorporates from the outset the idea of localization by 
admitting a formulation in terms of sheaves.

It happens that the usual formulation of Gelfand duality obscures its relation to this ideas, since it deals with 
noncompact spaces by restricting the behavior ¨at infinity" of the admissible functions and, as we will argue 
latter, this removes the relation between a region and the others which may contain it.

Our proposal is that, in order to account for localization one must go beyond $C^*$ algebras, and consider the so 
called locally $C^*$ algebras, as defined by Inoue \cite{Ino} and studied by many other including \cite{APT}, \cite{Phi},
\cite{FRAG}. In this setting a trivial consequence of Gelfand duality is the following:

\begin{thm}\label{thm:GDR}
 Given a commutative $C^*$ algebra, there is a compact topological space, $X$, and a sheaf of 
 commutative locally $C^*$ algebras over it, such that the original algebra is isomorphic to the 
 algebra of global sections of this sheaf.
\end{thm}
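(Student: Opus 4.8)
The plan is to read everything off classical Gelfand duality applied to $A$ and then add back a single compactification point. Write $A \cong C_0(Y)$ for the locally compact Hausdorff space $Y$ given by the Gelfand spectrum of $A$, and let $X = Y^+ = Y \sqcup \{\infty\}$ be the one-point compactification, which is compact Hausdorff; if $A$ happens to be unital then $Y$ is already compact and one simply takes $X = Y$ with no distinguished point. On $X$ I would define a presheaf $\mathcal{F}$ by assigning to an open set $U \subseteq X$ the algebra
\[
 \mathcal{F}(U) \;=\; \{\, f \in C(U) : f(\infty) = 0 \text{ whenever } \infty \in U \,\},
\]
topologized by uniform convergence on the compact subsets of $U$, with ordinary restriction of functions as the restriction maps. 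The point of admitting all continuous functions here — rather than only bounded ones — is precisely that on the non--relatively--compact open subsets of $X$ one genuinely leaves the category of $C^*$ algebras: the seminorms $p_K(f) = \sup_{x \in K}|f(x)|$, indexed by the compact $K \subseteq U$, form a directed family of $C^*$-seminorms, so each $\mathcal{F}(U)$ is a commutative locally $C^*$ algebra, reducing to a $C^*$ algebra exactly when $U$ is relatively compact.

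Next I would verify the three requirements. (i) $\mathcal{F}$ is a sheaf: locality and gluing of continuous functions are classical, the condition $f(\infty)=0$ is inherited by a glued section since $\infty$ lies in some member of the cover, and the compact-open topology on $\mathcal{F}(U)$ is the initial topology for the restrictions to the members of any open cover, so $\mathcal{F}$ is a sheaf of topological $*$-algebras. (ii) Each $\mathcal{F}(U)$ is complete, hence a locally $C^*$ algebra in the sense of Inoue: a Cauchy net converges uniformly on every compact $K \subseteq U$, and the limit function is continuous because $U$, being open in a compact Hausdorff space, is locally compact and hence compactly generated — this is where the ``compactly generated'' hypothesis that pervades the rest of the paper makes its appearance. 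Equivalently, $\mathcal{F}(U) \cong \varprojlim_{K} C(K)$ over the directed set of compact $K \subseteq U$. (iii) The global sections are correct: $\mathcal{F}(X) = \{\, f \in C(X) : f(\infty) = 0 \,\}$, and since $C(X) = C(Y^+)$ is the unitization of $C_0(Y)$ with $\infty$ the character that annihilates $A$, this subalgebra is exactly $C_0(Y) \cong A$; in the unital case $\mathcal{F}(X) = C(Y) = A$ directly. This is an isomorphism of $C^*$ algebras, in particular of locally $C^*$ algebras.

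The only delicate point — the one I expect to absorb most of the work — is part (ii) together with the topological bookkeeping in (i): one must check that the compact-open topology really makes $C(U)$ complete (using that open subspaces of compact Hausdorff spaces are $k$-spaces) and that this topology coincides with the projective-limit topology coming from restriction to compacta, so that the restriction and gluing maps are continuous and $\mathcal{F}$ genuinely lands in the category of locally $C^*$ algebras rather than merely of abstract $*$-algebras. Everything else is a direct transcription of Gelfand duality, which is why the statement can fairly be presented as a trivial consequence of it.
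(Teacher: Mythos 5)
Your proposal is correct, but it follows a different route from the one the paper has in mind. The paper treats Theorem \ref{thm:GDR} as the commutative shadow of its general machinery: the compact space is the space of points $\mathrm{pt}\,A = \beta\,\mathrm{Prim}\,A$ (the Stone--\v{C}ech compactification of the Gelfand/primitive spectrum, obtained via the Dauns--Hofmann identification $C(\beta\,\mathrm{Prim}\,A)\approx Z(M(A))$), the algebra is realized as $\Gamma(\mathcal{A})$ for the sub-bundle of $\mathrm{pt}\,A\times\mathbb{C}$ with zero fibers over the corona, and the sheaf is $U\mapsto\Gamma(\mathcal{A},U)$ with the compact-open topologies, which lands in locally $C^*$ algebras by the perfectness argument of Section \ref{sec:Sheaf}. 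You instead take the one-point compactification $Y^+$ of the Gelfand spectrum and define the sheaf directly as continuous functions vanishing at $\infty$ when $\infty$ is in the open set; your verifications (the vanishing condition is local, open subsets of a compact Hausdorff space are locally compact hence $k$-spaces, so $C(U)$ is complete in the compact-open topology and your $\mathcal{F}(U)$ is a closed subalgebra, and $\mathcal{F}(X)\cong C_0(Y)\cong A$ with the sup norm) all go through. What your route buys is economy and concreteness: no Stone--\v{C}ech compactification, no multiplier algebras, no $C^*$-bundle or sectional representation theorem, and a much smaller base space. What the paper's route buys is exactly what your construction cannot provide, namely a formulation ($\mathrm{Prim}\,Z(M(A))$ plus the associated bundle) that survives the passage to noncommutative algebras, which is the point of the rest of the paper. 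One small slip worth fixing: since $X$ is compact, \emph{every} open $U\subseteq X$ is relatively compact, so your parenthetical ``reducing to a $C^*$ algebra exactly when $U$ is relatively compact'' should read ``exactly when the compact-open topology is normable (e.g.\ when $U$ is compact)''; for a proper open $U$ the algebra is genuinely a Fr\'echet locally $C^*$ algebra, which is of course all the theorem requires.
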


Our main result is a extension of this to noncommutative locally $C^*$ algebras and compactly
generated spaces. 
To this end we turn to an interesting yet somewhat forgotten result  sometimes referred to as the 
sectional representation theorem. This is a consequence of some results proven by Dauns and Hofmann 
in the 60's (see \cite{DH} for a original reference or \cite{HOF} for a modern survey) and states that, 
for every $C^*$ algebra, there is a $C^*$ bundle, as will be defined defined in section \ref{sec:C*Alg}, 
such that the original algebra is isomorphic to some subalgebra of its algebra of continuous sections.

In Section \ref{sec:C*Alg} a corollary of the original sectional representation theorem, the one usually 
denoted by Dauns-Hofmann theorem in the modern $C^*$ algebra literature, is combined with other results
to provide a reinterpretation of  the original sectional representation theorem which, despite seeming 
weaker (the base space for the aforementioned bundle is then compact), hints on the generalization which 
is the main goal of this work.

Section \ref{sec:LocC*Alg} deals then with the generalization of the sectional representation theorem to 
the setting of locally $C^*$ algebras. 
To this end subsection \ref{subsec:PrimSpec} presents a convenient definition for the primitive spectrum of 
a locally $C^*$ algebra and provide some basic results about it. Subsection \ref{subsec:C*Fib} concludes the 
proof of one of our main theorems, which states that, for every locally $C^*$ algebra, there is a compactly 
generated space, the primitive spectrum of the center of its multiplier algebra, and a $C^*$ bundle over it, 
such that the original algebra is isomorphic to its algebra of continuous sections.

The connection between those bundles and sheaves of locally $C^*$ algebras is explored in section 
\ref{sec:Sheaf}.
We show that just as in the commutative case, a locally $C^*$ algebra always defines a sheaf of 
algebras, but to guarantee that this is actually a sheaf of locally $C^*$ algebras one must impose 
additional restrictions, such as dealing only with perfect locally $C^*$ algebras (cf. definition 
\ref{def:Perf}). 
Fortunately this case in broad enough to encompass most of the examples of interest such as $C^*$ algebras 
or $C^*$ bundles over locally compact spaces. We close the section with the tentative definition of a 
noncommutative space as the sheaf of locally $C^*$ algebras which is induced by a perfect locally $C^*$ 
algebra.

The paper is then concluded with some examples motivated by constructions from mathematical physics.
The first one is what the author propose to call "locally covariant quantum spacetime", a functor
between the category of Lorentzian manifolds of fixed dimension and noncommutative spaces, for which 
the stalks of the associated sheaf are isomorphic to the quantum spacetime algebra defined by Doplicher, 
Fredenhagen and Roberts in \cite{DFR}. Another class of examples is given by noncommutative spaces associated
to nets of $C^*$ algebras.
To define those we make use of a adaptation of a result proved by Ruzzi and and Vasseli in \cite{RV:NB} which 
shows that every algebraic quantum field theory, in the sense of Haag-Kastler axioms, induces a noncommutative 
space in our sense over the original spacetime.

\section*{Acknowledgments}

The authors are deeply indebted to K. Fredenhagen for all the support provided during Paulino's stay in 
Hamburg and the many invaluable suggestions and insights. Special thanks are also in due to P.L. Ribeiro 
and L.H.P. P\^egas for the many discussions that helped shape the ideas presented here.

\section{$\mathbf{C^*}$ Algebras and Compact Spaces}\label{sec:C*Alg}

A important object to be use in this work is the following

\begin{dfn}
 A \textbf{$\mathbf{C^*}$ Algebra Bundle}\/ or, more concisely, \textbf{$\mathbf{C^*}$ Bundle} over a topological 
 space~$X$ is a topological space $\mathcal{A}$ together with a surjective continuous and open 
 map $\, \xi: \mathcal{A} \longrightarrow X$, equipped with operations of fiberwise addition, scalar multi\-%
 plication, multiplication, involution and norm that turn each fiber $\, \mathcal{A}_x = \rho^{-1}(x) \,$ into a 
 $C^*$-algebra and are such that the corresponding maps
 \[
  \begin{array}{ccc}
   \mathcal{A} \times_X \mathcal{A} & \longrightarrow & \mathcal{A} \\[1mm]
               (a_1,a_2)             &   \longmapsto   &  a_1 + a_2
  \end{array} \quad , \quad
  \begin{array}{ccc}
   \mathbb{C} \times \mathcal{A} & \longrightarrow & \mathcal{A} \\[1mm]
             (\lambda,a)         &   \longmapsto   &  \lambda a
  \end{array}
 \]
 and
 \[
  \begin{array}{ccc}
   \mathcal{A} \times_X \mathcal{A} & \longrightarrow & \mathcal{A} \\[1mm]
               (a_1,a_2)             &   \longmapsto   &   a_1 a_2
  \end{array} \quad , \quad
  \begin{array}{ccc}
   \mathcal{A} & \longrightarrow & \mathcal{A} \\[1mm]
        a      &   \longmapsto   &     a^*
  \end{array}
 \]
 where $\, \mathcal{A} \times_X \mathcal{A} = \{ (a_1,a_2) \in \mathcal{A}\times \mathcal{A}~|~\xi(a_1) = \xi(a_2) \} 
 \,$ is the fiber product of~$\mathcal{A}$ with itself over~$X$, are all continuous.\footnote{Actually, it is sufficient 
 to require that scalar multiplication is continuous in the second variable, i.e., for each $\, \lambda \in \mathbb{C}$, 
 the map $\, \mathcal{A} \longrightarrow \mathcal{A}$, $a \longrightarrow \lambda a \,$ is continuous: this condition is 
 often easier to check in practice, but it already implies joint continuity~\cite[Proposition~C.17, p.~361]{Wil}.}
 Moreover, the function
 \[
  \begin{array}{ccc}
   \mathcal{A} & \longrightarrow & \mathbb{R} \\[1mm]
        a      &   \longmapsto   &    \|a\|
  \end{array}
 \]
 is supposed to be continuous or just upper semicontinuous, in which case one speaks of a \textbf{continuous} or 
 an \textbf{upper semicontinuous $\mathbf{C^*}$ bundle}, respectively, and to satisfy the following additional 
 continuity condition: any net $(a_i)_{i \in I}$ such that $\, \|a_i\| \to 0 \,$ and $\, \xi(a_i) \to x \,$ for 
 some $\, x \in X \,$ actually converges to $\, 0_x \in \mathcal{A}_x$. 
 Finally, we shall say that a $C^*$ bundle $\mathcal{A}$ is \textbf{unital} if all of its fibers $\mathcal{A}_x$ 
 are $C^*$ algebras with unit and, in addition, the unit section
\[
 \begin{array}{ccc}
  X & \longrightarrow & \mathcal{A} \\[1mm]
  x &   \longmapsto   &     1_x
 \end{array}
\]
 is continuous.
\end{dfn}

The study of this objects started with the early works of Fell, Dixmier and others for the case of continuous 
$C^*$-bundles,see for example \cite{FD} or \cite{DI}, and culminated with the works of Dauns and Hofmann in the 
mid 60's on the so called sectional representation theorems. In some of the of the earlier literature the term 
bundle used here is replaced by \emph{field}, and some times alternative equivalent definitions are used, but those 
have been abandoned in the modern literature in favor of the definitions and terminology presented 
here.%
\footnote{One criticism for the terminology adopted here is that therms like \emph{algebra bundle} are usually used 
as short hand for \emph{algebra fiber bundle}, 
were one usually requires the additional condition of local triviality. 
However the employment of the term \emph{bundle} as a generalization of \emph{fiber bundle} is so widespread in modern 
literature that the authors do not believe that significant confusion can arise.} 

Since the present work deal almost exclusively with this case, from now on, unless stated otherwise, all $C^*$ bundles 
are supposed to be upper semicontinous.

An important object associated to a $C^*$ bundle is the algebra of its continuous sections.
When $X$ is compact, the algebra $\Gamma(\mathcal{A})$ of all continuous sections of~$\mathcal{A}$, equipped 
with the usual pointwise defined operations of addition, scalar multiplication, multiplication and involution and 
with the usual sup norm,
\begin{equation} \label{eq:NSUP1}
 \|\varphi\|~=~\sup_{x \in X} \| \varphi(x) \|_x
 \qquad \mbox{for $\, \varphi \in \Gamma(\mathcal{A})$}~,
\end{equation}
is easily seen to be a $C^*$ algebra, and more than that: not only a $*$-algebra over the field of complex numbers 
but with the additional structure of a module over the $C^*$ algebra $C(X)$ of continuous functions on~$X$, subject 
to the compatibility conditions
\begin{equation} \label{eq:CSTMOD}
 f (\varphi_1 \varphi_2)~=~(f \varphi_1) \, \varphi_2~
 =~\varphi_1 \, (f \varphi_2) \quad , \quad
 (f \varphi)^*~=~\bar{f} \varphi^* \quad , \quad
 \| f \varphi \|~\leqslant~\| f \| \| \varphi \|~.
\end{equation}
When $X$ is locally compact but not compact, the situation is similar, but technically somewhat more complicated, because 
there are various choices to be made.
One of them consists in restricting to the algebra $\Gamma_0(\mathcal{A})$ of continuous sections of~$\mathcal{A}$ 
that vanish at infinity (in the usual sense that for each $\, \epsilon > 0$, there exists a compact subset $K$ of~$X$ such 
that $\, \|\varphi(x)\|_x < \epsilon \,$ whenever $\, x \notin K$), this is again a $C^*$ algebra and is even a module over 
the $C^*$ algebra $C_0(X)$ of continuous functions on~$X$ vanishing at infinity, subject to the same compatibility conditions
as before (see equation (\ref{eq:CSTMOD})), plus the condition of being nondegenerate, which states that the ideal generated 
by elements of the form $f \varphi$, with $\, f \in C_0(X) \,$ and $\, \varphi \in \Gamma_0(\mathcal{A})$, should be 
dense in~$\Gamma_0(\mathcal{A})$.
Note that the second case contains the first because when $X$ is compact, we can identify $C_0(X)$ with $C(X)$ and $\Gamma_0
(\mathcal{A})$ with $\Gamma(\mathcal{A})$ (since in that case the condition of vanishing at infinity is 
void and the nondegeneracy condition is automatically satisfied when the function algebra has a unit).
With this convention, we can describe the additional structure of~$\Gamma_0(\mathcal{A})$ as a module over $C_0(X)$ 
as being given by an embedding, in the sense of $C^*$ algebras, of $C_0(X)$ into the center $Z(M(\Gamma_0(\mathcal{A})))$ 
of the multiplier algebra $M(\Gamma_0(\mathcal{A}))$ of~$\Gamma_0(\mathcal{A})$.

This leads us to the definition of another important object in this work
\begin{dfn}
 Given a locally-compact topological space $X$, a $C_0(X)$ algebra is defined as a $C^*$ algebra $A$ equipped with a 
 homomorphism $\Phi: C_0(X) \rightarrow Z(M(A))$ which is non-degenerate, i.e. the closure of the ideal generated by 
 elements of the form $\Phi(f) a$ for $f\in C_0(X)$ and $a \in A$ is the whole algebra $A$. 
\end{dfn}

A comment that will be of interest to us latter on is that when $X$ is not compact we could replace $C_0(X)$ in the definition 
above by the $C^*$ algebra $C_b(X)$ of bounded continuous functions on~$X$, which has the advantage of being unital.
Obviously, $C_0(X)\subset C_b(X)$, and in fact $C_b(X)$ is just the multiplier algebra of~$C_0(X)$, which implies that for any 
$C^*$ algebra~$A$ with multiplier algebra $M(A)$, any nondegenerate $*$-homomorphism from $C_0(X)$ to~$M(A)$ extends uniquely 
to a $*$-homomorphism from $C_b(X)$ to~$M(A)$ \linebreak \cite[Corollary~2.51, p.~27]{RW}.
In particular, this means that \emph{any} $C_0(X)$ algebra is automatically also a $C_b(X)$ algebra.

As before in the compact case this structure is much simpler. When $X$ is compact the algebra $C_0(X) = C(X)$ is unital, so 
that the condition of nondegeneracy above reduces to the fact that the homomorphism $\Phi$ should be unital.

As it happens, \emph{every} $C^*$ algebra is a $C_0(X)$ algebra for a certain space $X$.
This is a consequence of the following result about $C^*$ algebras, \cite{MN},
\begin{thm}[Dauns-Hofmann]\label{thm:DH}
Given a $C^*$-algebra $A$ there is a canonical isomorphism $\Phi$ between  $C_b(\mathrm{Prim} A)$, the algebra of bounded functions 
over the primitive spectrum of $A$ and $Z(M(A))$ defined by the propriety that for every $P \in \mathrm{Prim} A$,
\[
  \Phi(f) a-f(P)a \in P
\]
for all elements $a \in A$ and $f \in C_b(\mathrm{Prim} A)$.
\end{thm}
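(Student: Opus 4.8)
The plan is to establish the Dauns--Hofmann isomorphism by first constructing the map $\Phi$ and then verifying it is an isometric $*$-isomorphism onto the center of the multiplier algebra. The key structural input is that $\Prim A$, the primitive ideal space with the Jacobson (hull--kernel) topology, carries enough topology to detect elements of $A$: for $a \in A$, the function $P \mapsto \|a + P\|$ on $\Prim A$ is lower semicontinuous and bounded by $\|a\|$, and $\|a\| = \sup_{P \in \Prim A} \|a+P\|$ since the intersection of all primitive ideals is zero. This is the basic machinery I would recall at the outset.

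First I would construct $\Phi$. Given $f \in C_b(\Prim A)$ and $a \in A$, I want to produce an element ``$\Phi(f)a$'' of $A$ satisfying $\Phi(f)a - f(P)a \in P$ for every primitive $P$. The strategy is to use a partition-of-unity / local approximation argument: since $f$ is bounded and continuous, on a suitable cover of $\Prim A$ one approximates $f$ by locally constant data, defines candidate elements $\lambda_i a$ on pieces where $f \approx \lambda_i$, and patches them using that $\Prim A$ is (locally) compact-ish in the relevant sense — more precisely one works inside the spectrum of a unitization or uses that the $\sigma$-weak/multiplier closure absorbs the limits. An alternative and cleaner route: pass to the enveloping von Neumann algebra $A^{**}$, where $Z(A^{**})$ is a complete commutative von Neumann algebra; central projections in $A^{**}$ correspond to certain subsets of $\Prim A$, and bounded Borel functions on $\Prim A$ act. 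One checks that a continuous bounded $f$ yields a central multiplier, i.e. $\Phi(f) \in Z(M(A))$, because multiplication by such an $f$ maps $A$ into $A$ (not merely into $A^{**}$) — this is where continuity of $f$ is essential, as opposed to mere boundedness/measurability. The defining relation $\Phi(f)a - f(P)a \in P$ is then just the statement that in the quotient $A/P$ (a primitive, hence $\Phi(f)$ acts as the scalar $f(P)$, since $Z(M(A))$ maps to scalars in any irreducible representation.

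Next I would verify the algebraic and isometric properties. That $\Phi$ is a $*$-homomorphism is immediate from the defining relation: $(\Phi(f)\Phi(g) - f(P)g(P))a \in P$ for all $P$, and by the separation property of $\Prim A$ an element lying in every primitive ideal is zero, giving $\Phi(fg) = \Phi(f)\Phi(g)$; similarly $\Phi(\bar f) = \Phi(f)^*$ and $\Phi(1) = 1$. Injectivity and isometry follow from $\|\Phi(f)a\| = \sup_P \|\Phi(f)a + P\| = \sup_P |f(P)|\,\|a+P\|$, which for a suitable approximate unit $a$ gives $\|\Phi(f)\| = \|f\|_\infty$. The real work is surjectivity: given $z \in Z(M(A))$, in every irreducible representation $\pi_P$ (with kernel $P$) the central element $z$ acts as a scalar $\widehat{z}(P) \in \mathbb{C}$; I must show $P \mapsto \widehat{z}(P)$ is continuous and bounded on $\Prim A$, and that $\Phi(\widehat z) = z$. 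Boundedness is clear ($|\widehat z(P)| \le \|z\|$); the equality $\Phi(\widehat z) = z$ again follows from the separation property once $\widehat z$ is known to lie in $C_b(\Prim A)$.

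The main obstacle is precisely this continuity of $P \mapsto \widehat z(P)$ together with the well-definedness of $\Phi(f)a$ as an element of $A$ rather than of some completion. Both hinge on a genuine topological fact about $\Prim A$ — essentially that the natural map from bounded continuous functions on $\Prim A$ into the bounded operators commuting with $A$ is well-behaved — and the classical proofs handle this via the structure of $A^{**}$ and the correspondence between open subsets of $\Prim A$ and closed ideals of $A$ (equivalently, between central projections of $A^{**}$ and regular open sets). I would cite \cite{MN} or \cite{RW} for the detailed verification of this step and present the construction of $\Phi$ and the easy algebraic consequences in full, since those are what the rest of the paper actually uses.
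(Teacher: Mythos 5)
The paper itself does not prove this theorem: it is imported as a known result with a citation (\cite{MN}; see also the appendix of \cite{RW}), so there is no in-paper argument to compare yours against. Judged on its own terms, your attempt has a genuine gap, and it sits exactly where you yourself locate it. The parts you carry out in detail --- that $\Phi$ is a unital $*$-homomorphism, that $\|\Phi(f)\|=\|f\|_\infty$ via $\|a\|=\sup_P\|a+P\|$, that $\widehat{z}(P)$ is a well-defined bounded scalar (central multipliers act as scalars in irreducible representations), and that uniqueness and $\Phi(\widehat{z})=z$ follow because the primitive ideals intersect to zero --- are the routine consequences of the defining relation. The actual content of the Dauns--Hofmann theorem is precisely the two steps you defer to \cite{MN} and \cite{RW}: (i) for every $f\in C_b(\Prim A)$ and $a\in A$ there exists an element of $A$ (not merely of $A^{**}$) satisfying $\Phi(f)a-f(P)a\in P$ for all $P$, so that $\Phi(f)$ defines a multiplier; and (ii) for $z\in Z(M(A))$ the function $P\mapsto\widehat{z}(P)$ is continuous in the hull--kernel topology. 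A proof that cites these two facts has assumed the theorem.

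Moreover, the routes you gesture at for (i) are shaky as stated. $\Prim A$ is in general only $T_0$, neither Hausdorff nor completely regular, so there is no naive partition-of-unity machinery on it; the standard arguments instead combine lower semicontinuity of $P\mapsto\|a+P\|$ with quasi-compactness and the correspondence between open subsets of $\Prim A$ and closed two-sided ideals of $A$. Your proposed dictionary ``central projections of $A^{**}$ $\leftrightarrow$ regular open sets of $\Prim A$'' is not the correct one (only certain open central projections arise from ideals), and the sentence asserting that multiplication by $f$ maps $A$ into $A$ ``because continuity of $f$ is essential'' is a restatement of claim (i), not a verification of it. So either import the theorem wholesale with a citation, as the paper does, or supply genuine arguments for (i) and (ii); as written, the proposal does neither.
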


Not surprisingly an analogous result to the Serre-Swan theorem for vector bundles relates both concepts defined above.

\begin{thm}\label{thm:SecRep}
 Given a $C^*$ algebra $A$ and a locally compact topological space $X$ the following statements are equivalent:
 \begin{itemize}
  \item $A$ is a $C_0(X)$ algebra.
  \item There is a $C^*$ bundle $\mathcal{A}$ over $X$ such that $A$ is isomorphic to $\Gamma_0(\mathcal{A})$.
  \item There is a continuous map $\chi: \mathrm{Prim} A \rightarrow X$
 \end{itemize}
\end{thm}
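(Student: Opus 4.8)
The plan is to establish the three implications that close the cycle among the statements, treating the equivalence as a triangle: $C_0(X)$ algebra $\Rightarrow$ sectional representation $\Rightarrow$ continuous map to $X$ $\Rightarrow$ $C_0(X)$ algebra. The cleanest route exploits the Dauns--Hofmann theorem (Theorem~\ref{thm:DH}) as the bridge between the algebraic data (a homomorphism into $Z(M(A))$) and the topological data (a map of spectra).

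First I would prove that if $A$ is a $C_0(X)$ algebra via $\Phi\colon C_0(X)\to Z(M(A))$, then there is a continuous map $\chi\colon\Prim A\to X$. The idea is to dualize $\Phi$. By the Dauns--Hofmann theorem, $Z(M(A))\cong C_b(\Prim A)$, so $\Phi$ gives a unital $*$-homomorphism $C_0(X)\to C_b(\Prim A)$ (unital after passing to $C_b(X)$ using the remark in the excerpt that every $C_0(X)$ algebra is a $C_b(X)$ algebra, or directly using nondegeneracy). A unital $*$-homomorphism $C_0(X)\to C_b(\Prim A)$ is, by Gelfand duality applied fiberwise/pointwise, induced by a continuous map: evaluation at a point $P\in\Prim A$ composed with $\Phi$ is a character (or the zero functional) on $C_0(X)$, hence given by evaluation at a unique point $\chi(P)\in X$ (using nondegeneracy to rule out the zero functional, or allowing $X^+$ and arguing it lands in $X$). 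Continuity of $\chi$ follows because for each $f\in C_0(X)$ the function $P\mapsto f(\chi(P))$ equals the continuous function corresponding to $\Phi(f)$ under Dauns--Hofmann, and $C_0(X)$ separates points of $X$, so $\chi$ is continuous into $X$ with its weak topology, which coincides with the given topology.

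Next, conversely, given a continuous map $\chi\colon\Prim A\to X$, I would produce the $C_0(X)$ algebra structure: compose with the pullback $\chi^*\colon C_b(X)\to C_b(\Prim A)\cong Z(M(A))$, check nondegeneracy. Here one must be slightly careful: $\chi^*$ sends $C_0(X)$ into $C_b(\Prim A)$ but perhaps not into a subalgebra giving nondegeneracy automatically, so one restricts attention to functions whose pullback generates a dense ideal; alternatively invoke that $\Prim A$ is the spectrum-like object and use that the canonical map $\Prim A\to\Prim A$ (identity) already exhibits $A$ as a $C_0(\Prim A)$ algebra via Dauns--Hofmann, then transport along $\chi$. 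The remaining implication, that a $C_0(X)$ algebra admits a $C^*$ bundle $\mathcal{A}$ over $X$ with $A\cong\Gamma_0(\mathcal{A})$ and conversely, is the genuine ``Serre--Swan''-type content: one defines $\mathcal{A}_x = A/\overline{\Phi(C_0(X\setminus\{x\}))A}$ (the fiber at $x$, i.e.\ the quotient by the ideal of sections vanishing at $x$), topologizes the disjoint union $\mathcal{A}=\bigsqcup_x\mathcal{A}_x$ so that the images of elements of $A$ become continuous sections, and verifies the bundle axioms (openness of $\xi$, upper semicontinuity of the norm, the convergence-to-zero condition), then shows the evaluation map $A\to\Gamma_0(\mathcal{A})$ is an isometric $*$-isomorphism onto the continuous sections vanishing at infinity; the converse direction was essentially recorded already in the discussion preceding the Dauns--Hofmann theorem in the excerpt.

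The main obstacle I expect is the bundle construction and its identification with $\Gamma_0(\mathcal{A})$: verifying upper semicontinuity of $x\mapsto\|[a]_x\|$ and the additional net-convergence axiom requires the $C_0(X)$-module structure in an essential way (partitions of unity / approximate units subordinate to open covers of $X$), and proving surjectivity of $A\to\Gamma_0(\mathcal{A})$ onto the vanishing-at-infinity sections needs a gluing argument that a local section agreeing with images of elements of $A$ can be patched globally — this is where local compactness of $X$ and nondegeneracy are both used. By contrast, the two implications routed through Dauns--Hofmann are formal once one accepts Theorem~\ref{thm:DH}, since they amount to translating between $*$-homomorphisms out of $C_0(X)$ and continuous maps into $X$ via Gelfand duality, with nondegeneracy ensuring the dual map genuinely lands in $X$ rather than in a compactification.
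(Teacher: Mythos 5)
Your outline is essentially the proof the paper relies on: the paper does not prove Theorem~\ref{thm:SecRep} itself but defers to \cite[Theorem~C.26]{Wil}, recording only the fiber formula $\mathcal{A}_x = A/\overline{\lbrace f\in C_0(X)\,:\, f(x)=0\rbrace\cdot A}$, and your plan (Dauns--Hofmann to translate between $\Phi\colon C_0(X)\to Z(M(A))$ and a continuous map $\Prim A\to X$, together with the fiberwise-quotient bundle construction and the identification $A\cong\Gamma_0(\mathcal{A})$) is exactly that standard argument. One small correction to your hedge in the step from a continuous $\chi\colon\Prim A\to X$ back to a $C_0(X)$-structure: nondegeneracy of the homomorphism sending $f$ to the element of $Z(M(A))$ corresponding to $f\circ\chi$ under Theorem~\ref{thm:DH} is automatic, since if the ideal $\overline{\Phi(C_0(X))A}$ were proper it would be contained in some primitive ideal $P$, and choosing $f\in C_0(X)$ with $f(\chi(P))=1$ and using $\Phi(f)a-f(\chi(P))a\in P$ would force $a\in P$ for all $a$, a contradiction --- so no restriction to special functions (which would change the structure in question) is needed.
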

\noindent A certain feature of the proof of this theorem is that given a $C_0(X)$ algebra $A$
the fiber of the associated bundle over a point over a point $x\in X$, $\mathcal{A}_x$, is
defined by:
\[
 \mathcal{A}_x =  A\:/\: \overline{\lbrace f \in C_0(X)\: |\: f(x)=0 \rbrace \cdot A}.
\]
This fact will be of importance for our purposes latter on.
For a complete proof of this theorem the reader is referred to \cite[Theorem~C.26., p.~367]{Wil}.

Here we are faced with a technical difficulty  since the space $\mathrm{Prim}A$ may, in general
be very pathological, even non-Hausdorff. Because of this, before applying theorem \ref{thm:SecRep} 
we need a reformulation of the Dauns-Hofmann theorem. 

Given any topological space $X$ one may define its \emph{Stone-\v{C}ech compactification}. 
This is a compact Hausdorff space, denoted by $\beta X$, along with a canonical map\footnote{Here we implicitly use a generalization of the usual notion of compactification, it is important to note 
the distinction between $\beta X$, the Stone-\v{C}ech compactification of a space $X$, and $\beta (X) 
\subset \beta X$, the image of $X$ under the canonical map $\beta$. Had we required, as usual, the
space $X$ to be completely regular, the map beta would have turned out to be $\beta$  injective so 
that $X$ can then be identified with its image and such a distinction would not be necessary.}
$\beta: X \rightarrow \beta X$, defined by the universal propriety that for  every continuous 
mapping $f: X \rightarrow Y$ from $X$ to any compact Hausdorff topological space $Y$  there is a unique 
extension $\beta f$ to $\beta X$ such that $f = \beta f \circ \beta$. An interesting fact is that for any 
topological space $X$ the Stone-\v{C}ech compactification can be defined by $\beta X = \mathrm{Prim} C_b(X)$, 
so that we have 
\[
 C(\beta X) \approx C_b(X).
\]
This shows that every $C_0(X)$ algebra is a $C(\beta X)$ algebra, in particular the isomorphism from the 
Dauns-Hofmann theorem implies that
\[
 \beta \mathrm{Prim} A \approx \mathrm{Prim} Z(M(A)).
\] 
and so that every $C^*$ algebra is a $C(\beta \mathrm{Prim} A)$ algebra.
The compact Hausdorff topological space $\beta \mathrm{Prim} A$ is denoted by $\mathrm{pt} A$
and call it the space of points of the algebra $A$. Combining this remarks with theorem 
\ref{thm:SecRep} we get the following reformulation of the sectional representation theorem,
proved by Dauns and Hofmann in \cite{DH}.

\begin{thm}\label{thm:NCGT}
 Every $C^*$ algebra $A$ is isomorphic to the algebra of continuous sections
 of a $C^*$ bundle $\mathcal{A}\rightarrow \mathrm{pt} A$ over its space of
 points. Moreover, for $P\in \pt A$, the associated fiber $\mathcal{A}_P$
 can be written as
 \[
  \mathcal{A}_{P} = A / (P \cdot A),
 \]
 where $P \cdot A$ denotes the closure of the ideal in $A$ generated by elements of the form 
 $fa$ where $f \in P \subset Z(M(A))$ and $a \in A$.
\end{thm}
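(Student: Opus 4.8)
The plan is to derive the statement formally from the ingredients already assembled above: the Dauns--Hofmann isomorphism (Theorem~\ref{thm:DH}), the identifications $C(\beta X) \approx C_b(X)$ and $\pt A = \beta\,\Prim A \approx \Prim Z(M(A))$, and the sectional representation theorem for $C_0(X)$-algebras (Theorem~\ref{thm:SecRep}). The point is that these force $A$ to be a $C_0(\pt A)$-algebra over the compact Hausdorff space $\pt A$, after which Theorem~\ref{thm:SecRep} supplies both the bundle and, via the fiber formula recorded just after it, the description of $\mathcal{A}_P$.

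First I would exhibit $A$ as a $C(\pt A)$-algebra. By Theorem~\ref{thm:DH}, $\Phi$ is a $*$-isomorphism from $C_b(\Prim A)$ onto $Z(M(A))$; composing it with the canonical isomorphism $C(\pt A) = C(\beta\,\Prim A) \approx C_b(\Prim A)$ and with the inclusion $Z(M(A)) \hookrightarrow M(A)$ produces a $*$-homomorphism $\Psi \colon C(\pt A) \to Z(M(A))$. Since $\pt A$ is compact Hausdorff, $C(\pt A) = C_0(\pt A)$ is unital, and $\Psi$ carries its unit to the unit of $M(A)$, so $\Psi$ is unital, hence nondegenerate; thus $A$ is a $C_0(\pt A)$-algebra. (Alternatively, one could start from the canonical continuous map $\Prim A \to \beta\,\Prim A = \pt A$ and invoke the implication ``third statement $\Rightarrow$ first statement'' in Theorem~\ref{thm:SecRep}.) Applying the equivalence of the first two statements of Theorem~\ref{thm:SecRep} with $X = \pt A$ now yields a $C^*$ bundle $\mathcal{A} \to \pt A$ with $A \cong \Gamma_0(\mathcal{A})$, and since $\pt A$ is compact we have $\Gamma_0(\mathcal{A}) = \Gamma(\mathcal{A})$. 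This is the first assertion.

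For the fibers, the construction behind Theorem~\ref{thm:SecRep} yields, for $P \in \pt A$,
\[
 \mathcal{A}_P \;=\; A \, / \, \overline{\{\, f \in C(\pt A) : f(P) = 0 \,\} \cdot A}\,,
\]
where the module action is $f \cdot a = \Psi(f)\, a$. It remains to see that $\Psi$ maps the ideal $\{ f \in C(\pt A) : f(P) = 0 \}$ onto the ideal $P \subset Z(M(A))$. This is precisely Gelfand duality for the commutative unital $C^*$ algebra $Z(M(A))$: under the identification $\pt A \approx \Prim Z(M(A))$ induced by $\Phi$, a point $P$ is a maximal ideal of $Z(M(A))$, and $\Psi$ becomes the Gelfand isomorphism $C(\Prim Z(M(A))) \cong Z(M(A))$, which sends the functions vanishing at $P$ exactly to the elements of $Z(M(A))$ lying in $P$. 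Hence $\{ f : f(P) = 0 \} \cdot A$ coincides with the ideal of $A$ generated by the elements $ga$ with $g \in P$ and $a \in A$, and passing to closures gives $\mathcal{A}_P = A/(P \cdot A)$, as claimed.

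Almost all of the analysis --- that $A \cong \Gamma(\mathcal{A})$ is a genuine $*$-isomorphism and that the bundle topology on $\mathcal{A}$ is well behaved --- is inherited from the cited proof of Theorem~\ref{thm:SecRep}, so this proof is essentially a matter of assembling known pieces. The one place where I expect care to be needed is the bookkeeping tying together the several identifications in play --- $C(\pt A) \approx C_b(\Prim A) \approx Z(M(A))$, $\pt A = \beta\,\Prim A \approx \Prim Z(M(A))$, and the canonical map $\Prim A \to \pt A$ --- so that the point $P \in \pt A$ entering the bundle construction is literally the same object as the maximal ideal $P \subset Z(M(A))$ appearing in the fiber formula; once that compatibility is nailed down, the computation of $\mathcal{A}_P$ is immediate.
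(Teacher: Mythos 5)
Your proposal is correct and follows essentially the same route as the paper, which obtains Theorem~\ref{thm:NCGT} precisely by combining the Dauns--Hofmann isomorphism with the identification $C(\beta\,\Prim A)\approx C_b(\Prim A)$ and then invoking Theorem~\ref{thm:SecRep} together with the fiber formula stated after it. The only difference is that you spell out the nondegeneracy of the unital homomorphism $C(\pt A)\to Z(M(A))$ and the Gelfand-duality bookkeeping identifying $\{f: f(P)=0\}$ with $P\subset Z(M(A))$, details the paper leaves implicit.
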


This theorem reveals some curious facts when $A$ is a commutative $C^*$ algebra.
If the algebra $A$ is unital, $\mathrm{Prim}A = \mathrm{pt}A$ is compact and one can easily 
check that $\mathrm{pt}A\times \mathbb{C}$ is precisely the $C^*$ bundle given by our theorem, 
so that one gets
\[
  A \approx \Gamma(\mathrm{pt}A \times \mathbb{C}) \approx C(\mathrm{pt}A),
\] 
recovering precisely the original commutative Gelfand theorem.

When $A$ is nonunital one need to use the canonical map $\beta$  between the primitive spectrum $\mathrm{Prim}A$ 
and the space of points $\mathrm{pt}A = \beta \mathrm{Prim}A$. Then
\[
  C_0 (\mathrm{Prim}A) \approx
\lbrace f \in C(\mathrm{pt}A) \: | \: \forall x \not\in \beta(\mathrm{Prim}A) \:\: f(x) = 0\rbrace\]
We define a sub-bundle $\mathcal{A} \subset \mathrm{pt}A \times 
\mathbb{C}$ by
\begin{equation*}
 \mathcal{A}_x = \left\lbrace \begin{array}{cc}
                     \mathbb{C} & x \in \beta(\mathrm{Prim}A)  \\
                     \lbrace\emptyset\rbrace  & x\not\in \beta(\mathrm{Prim}A)
                    \end{array} \right.
\end{equation*}
Then $C_0 (\mathrm{Prim}A) \approx \Gamma(\mathcal{A}) \approx A$ and this
gives us back the original Gelfand theorem for nonunital commutative $C^*$-algebras.

The previous remarks point to an interesting feature of $C^*$ algebras that are usualy 
overlooked in the formulation of the original Gelfand Theorem. Due to the last identification 
above we can not distinguish between the algebra of functions vanishing at infinity over a 
noncompact space and some closed subalgebra of the functions over its Stone-\v{C}ech 
compactification. 

One may then take this as a indication that $C^*$ algebras are ill suited to deal with noncompact
spaces; by restricting the behavior of the sections at infinity one loses the information
about the noncompactness.

\section{Noncompact Spaces and Locally $C^*$ Algebras}\label{sec:LocC*Alg}

It is a well know fact that a locally compact space $X$ is noncompact if and only if 
there is a continuous unbounded function $f\in C(X)$.
This hints that one should consider a mathematical structure that generalizes the concept 
of $C^*$-algebra as to be able to deal with possibly unbounded functions on noncompact 
spaces. 
Fortunately, this structure already exists: it has been introduced in the early 1970's 
under the name ``locally $C^*$ algebra''~\cite{Ino} and further investigated, partly 
under other names such as ``pro-$C^*$-algebra'' by various authors; see, e.g., 
\cite{BK}, \cite{FRAG}, \cite{Phi}:
\begin{dfn}
 A\/ \textbf{locally $C^*$-algebra} $A$ is a $*$-algebra equipped with
 a locally convex topology which is Hausdorff, complete and generated
 by a family of $C^*$-seminorms.
\end{dfn}
\noindent
For the sake of definiteness, we recall here that a $C^*$-seminorm on
a $*$-algebra~$A$ is a seminorm $s$ in the usual sense (i.e., on~$A$
as a vector space) which satisfies the additional requirements for a
$C^*$-norm except for definiteness, namely,
\begin{equation} \label{eq:CSTSN1}
 s(a_1 a_2)~\leqslant~s(a_1) \, s(a_2) \quad , \quad s(a^*)~=~s(a)
 \quad , \quad s(a) \geqslant 0
\end{equation}
and
\begin{equation} \label{eq:CSTSN2}
 s(a^* a)~=~s(a)^2~.
\end{equation}
Note that for a given locally $C^*$-algebra $A$, there may of course be
many different families of $C^*$-seminorms that generate its topology,
and by taking maximums over finite sets, we can always work with
families that are saturated.%
\footnote{A family $(s_i)_{i \in I}$ of seminorms on a vector space is
said to be saturated if for each finite subset $\{i_1,\ldots,i_p\}$
of~$I$, there exists some $\, i \in I \,$ such that $\, s_{i_k}(x)
\leqslant s_i(x) \,$ for all~$x$ and $\, 1 \leqslant k \leqslant p$.}

Given a locally $C^*$ algebra $A$ we denote by $S(A)$ the directed set of all continuous
$C^*$ seminorms in it's locally convex topology, equipped with the obvious order relation.
For each seminorm $s \in S(A)$, the kernel of $s$ is a closed $*$-ideal of~$A$, 
so that we can define a $C^*$-algebra $A_s$ as the completion of the quotient 
of~$A$ by the kernel of $s$, which is a normed $*$-algebra with respect to the 
$C^*$-norm $\|.\|_s$ induced on it by~$s$. (In fact, it turns out \emph{a posteriori} 
that this quotient is already complete, so $\, A_s = A\,/\ker s$.)
Moreover, we can show that $A$ is the \emph{inverse limit},
also called the \emph{projective limit}, of the family of $C^*$ algebras 
$(A_s)_{s \in S(A)}$. 
The family of $C^*$ algebras $(A_s)_{s \in S(A)}$ is called the \emph{Michael-Arens 
decomposition} of the algebra $A$.

The basic examples of this kind of structure are of course provided by continuous functions and, 
more generally, continuous sections of $C^*$-bundles over compactly generated spaces.
Indeed, given any such space~$X$, the $*$-algebra $C(X)$ of continuous functions on~$X$ is a locally 
$C^*$ algebra with respect to the compact-open topology, which is defined by uniform convergence over 
compact sets, or equivalently by the family of $C^*$-seminorms $(\|.\|_K)_{K \subset X, K\;
\mathrm{compact}}$ given by
\begin{equation} \label{eq:NSUP2}
 \|f\|_K~=~\sup_{x \in K} |f(x)|
 \qquad \mbox{for $\, f \in C(X)$}~.
\end{equation}
More generally, given any $C^*$-bundle $\mathcal{A}$ over~$X$, the $*$-algebra $\Gamma(\mathcal{A})$ 
of continuous sections of~$\mathcal{A}$ is a locally $C^*$-algebra with respect to its natural topology, 
which is again that of uniform convergence on compact subsets, defined by the family of $C^*$-seminorms 
$(\|.\|_K)_{K \subset X, K\;\mathrm{compact}}$.

It happens that, unfortunately, this is not the most general situation.

\begin{dfn}
 Given a topological space $X$ a \textbf{distinguished family of compact sets} 
 is a family $F$ of compact sets in $X$ which satisfies
 \begin{itemize}
 
  \item Every singleton of $X$ is in $F$.
 
  \item Every compact subset of a compact in $F$ is in $F$.
 
  \item The union of two elements in $F$ is again in $F$.
 
  \item The space $X$ is the direct limit of the family $F$
       ordered by inclusion, i.e. $\bigcup F = X$ and
       \[
        C \;\text{is closed in}\; X \Longleftrightarrow C \cap K \; \text{is closed in} \;
        K \; \text{for every} \; K\in F.
       \]
 \end{itemize}
 A \textbf{Compactly Generated Space} is a topological space which admits a distinguished
 family of compact sets.
\end{dfn}
As was shown in \cite{Phi}, for any such a family, we can define a new locally $C^*$ topology on the space 
of continuous functions over $X$ by uniform convergence over the compacts in $F$, i.e. given by the 
$C^*$-seminorms $(\|.\|_K)_{K \in F}$. It can be shown that for two different distinguished families over 
a functionally Hausdorff\footnote{i.e. such that points can be separated by continuous functions.} space one 
can define a net of functions that converges to zero only in one of the topologies, so that, if the space $X$ 
admits more than one family of distinguished compact sets, one may have more than one locally $C^*$ topology in 
the algebra $C(X)$.

This example can then be generalized to the continuous sections of a $C^*$ bundle.
Given a $C^*$ bundle $\mathcal{A} \rightarrow X$ over a compactly generated space $X$ with a distinguished 
family of compact sets $F$ one may define a locally $C^*$ topology in the algebra of continuous sections 
$\Gamma(\mathcal{A})$ using the seminorms:
\begin{equation} \label{eq:NSUP4}
 \|\varphi\|_K~=~\sup_{x \in K} \| \varphi(x) \|_x
 \qquad \mbox{for $\, \varphi \in \Gamma(\mathcal{A})$}~.
\end{equation}
for $K\in F$.

\subsection{The Primitive Spectrum of a Locally $C^*$ Algebra}\label{subsec:PrimSpec}

Our main aim is now to show how theorem \ref{thm:NCGT} can be extended to locally $C^*$ algebras. 
To do so we need first to extend our knowledge about the primitive spectrum from the setting of $C^*$ algebras 
to that of locally $C^*$ algebras.
Since there seems to be no systematic account of this in the literature, we present here a brief collection of 
definitions and results that remain valid in the context of locally $C^*$ algebras. 
Most of what is done here is largely based on the presentation of the primitive spectrum in \cite[Sec. A.2]{RW}

\begin{dfn}
 Let $A$ be a locally $C^*$ algebra. An ideal $P \subset A$ is said to be a \textbf{primitive ideal} if
 there is a irreducible\footnote{As is the case for $C^*$ algebras a representation of a locally $C^*$
 is topologically irreducible if and only if it is algebraically irreducible, \cite{BK}, and so we make 
 no distinction between both concepts.} representation $\pi$ of $A$ such that $P = \ker \pi$. The set of 
 all primitive ideals in $A$ is called the \textbf{primitive spectrum} and denoted by $\mathrm{Prim} A$.
\end{dfn}

Before introducing a topology on the primitive spectrum we prove a basic lemma about primitive ideals.

\begin{lem}\label{lem:intprime}
 Let $A$ be a locally $C^*$ algebra. 
 \begin{itemize}
  \item Every closed ideal in $A$ is the intersection of the primitive ideals containing it.
  
  \item if $P$  is a primitive ideal in $A$ and $I$ and $J$ are ideals such that $I \cap J
  \subset P$ then either $I \subset P$  or $J \subset P$
 \end{itemize}
\end{lem}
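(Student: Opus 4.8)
The plan is to deduce the first assertion from the corresponding statement for $C^*$ algebras by means of the Michael--Arens decomposition, and to establish the second one by transcribing the classical Hilbert-space argument, noting that it nowhere used completeness of the norm.

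For the first item, write $A = \varprojlim_{s \in S(A)} A_s$ with canonical surjections $q_s \colon A \to A_s$, as recalled above. I would first check the elementary fact that a \emph{closed} ideal $I$ of $A$ is recovered from its images: setting $I_s := \overline{q_s(I)}$, which is a closed ideal of the $C^*$ algebra $A_s$, one has $I = \bigcap_{s \in S(A)} q_s^{-1}(I_s)$, which is merely a restatement of $I = \overline{I}$ in terms of the generating $C^*$-seminorms. Next I would record that pulling back an irreducible representation of $A_s$ along the surjection $q_s$ produces an irreducible representation of $A$ with kernel $q_s^{-1}(\ker)$, so that $q_s^{-1}(\Prim A_s) \subseteq \Prim A$; this uses only that $q_s$ is surjective and continuous. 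Granting these two points, the inclusion $I \subseteq \bigcap\{P \in \Prim A : I \subseteq P\}$ is trivial, and for the reverse inclusion one takes $a \notin I$, chooses $s$ with $q_s(a) \notin I_s$, applies the $C^*$ version of the statement (see e.g. \cite[Sec.~A.2]{RW}) to get $P' \in \Prim A_s$ with $I_s \subseteq P'$ and $q_s(a) \notin P'$, and puts $P := q_s^{-1}(P')$; then $P \in \Prim A$, $I \subseteq P$ and $a \notin P$, so $a \notin \bigcap\{P \in \Prim A : I \subseteq P\}$.

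For the second item, let $P = \ker \pi$ with $\pi$ an irreducible representation of $A$ on a Hilbert space $H$, and suppose for contradiction that $I \not\subseteq P$ and $J \not\subseteq P$. Since $J$ is a two-sided ideal, the closed subspace $\overline{\pi(J)\,H}$ is $\pi(A)$-invariant; it is nonzero because $J \not\subseteq \ker\pi$, hence equals $H$ by (topological, equivalently algebraic \cite{BK}) irreducibility. On the other hand, for $b \in I$ and $c \in J$ we have $bc \in I \cap J \subseteq P = \ker\pi$, so $\pi(b)\pi(c) = 0$; thus each operator $\pi(b)$ with $b \in I$ vanishes on the dense subspace $\pi(J)\,H$ and, being bounded, vanishes on all of $H$. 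Hence $I \subseteq \ker\pi = P$, contradicting $I \not\subseteq P$.

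I expect the main obstacle to be the first item, and within it the bookkeeping that transports the $C^*$-algebraic theory of the primitive spectrum through the Michael--Arens decomposition: namely, verifying that closed ideals are the inverse limits of the closures of their images, and that irreducible representations of the quotients $A_s$ pull back to irreducible representations of $A$ (one also checks the converse, that every continuous irreducible representation of $A$ is bounded for some $s \in S(A)$ and hence factors through $A_s$ with the same image, so that in fact $\Prim A = \bigcup_s q_s^{-1}(\Prim A_s)$). The second item is by contrast routine once one observes that the only properties of $\pi$ used are that it takes values in bounded operators and that its invariant closed subspaces are trivial.
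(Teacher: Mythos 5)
Your proof is correct, but for the first bullet it follows a genuinely different route from the paper. The paper does not pass through the Michael--Arens decomposition at all: given a closed ideal $I$ and $a \notin I$, it invokes a result of Fragoulopoulou that the quotient $A/I$ carries a topology generated by $C^*$-seminorms, forms the locally $C^*$ completion $A_I$ of $A/I$, and then uses the fact that a locally $C^*$ algebra has enough irreducible representations to separate the nonzero class $a+I$; pulling that representation back along the quotient map yields the desired primitive ideal containing $I$ but not $a$. Your argument instead reduces everything to the classical $C^*$ statement in the quotients $A_s$, via the elementary identity $I = \bigcap_{s} q_s^{-1}\bigl(\overline{q_s(I)}\bigr)$ for closed ideals (valid since $S(A)$ is directed, so single-seminorm balls form a neighborhood base) and the observation that irreducible representations of $A_s$ pull back to irreducible representations of $A$ --- the same observation the paper itself uses later in Theorem \ref{thm:prmdec}. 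What your route buys is self-containedness: you need only the $C^*$-algebraic fact from \cite[Sec.~A.2]{RW} and inverse-limit bookkeeping, avoiding the quotient/completion theory for locally $C^*$ algebras and the existence of sufficiently many irreducible representations of such algebras (which is itself usually proved by reduction to the $A_s$); what the paper's route buys is brevity given that machinery, and it works directly with the ideal $I$ without choosing a seminorm. The parenthetical claim that every continuous irreducible representation factors through some $A_s$ is true but not needed for the statement. For the second bullet your argument is essentially identical to the paper's (the paper argues contrapositively with $\overline{\pi(I)\mathcal{H}}$, you argue by contradiction with $\overline{\pi(J)H}$), and both rest only on invariance of that subspace, irreducibility, and boundedness of the operators $\pi(b)$.
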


\begin{proof}
 To prove the first statement we show that if $I$ is a closed ideal in $A$ and $a \notin I$ then there is
 a primitive ideal $P \in \Prim A$ such that $I \subset P$ and $a \notin P$. We first note that as was proved 
 in \cite[Theorem 11.7, p. 140]{FRAG} the algebra $A/I$ admits a topology generated by $C^*$-seminorms and so 
 has a locally $C^*$ completion (see remarks in \cite[pp. 14, 102]{FRAG}). Let $A_I$ be this completion and 
 $q_I: A \rightarrow A_I$ the quotient mapping. 
 Then $A_I \ni a+I \neq 0$ and so that there is a irreducible representation $\pi^\prime$ of $A_I$ such 
 that $\pi^\prime (a + I) \neq 0$ and thus $\pi = \pi^\prime \circ q_I$ is a irreducible representation 
 of $A$ such that $a \notin P = \ker \pi$. 
 
 To prove the second claim let $\pi: A \rightarrow \mathcal{B}(\mathcal{H})$ be an irreducible representation
 such that $P  = \ker \pi$. If $I \slashed\subset P$ then $\pi (I) \neq 0$ and so $\mathcal{V} = 
 \overline{\pi(I)\mathcal{H}}$ is also nonzero. Since $I$ is an ideal $\mathcal{V}$ is invariant and, since 
 $\pi$ is irreducible this implies that $\mathcal{V} = \mathcal{H}$, so that
 \[
  \pi(J)\mathcal{H} = \pi(J)\left( \pi(I) \mathcal{H}  \right) \subset \pi(I\cap J) \mathcal{H} \subset  \pi(P) \mathcal{H} = 0 
 \]
 proving that $J \subset \ker \pi = P$.
\end{proof}

We define then a topology on $\Prim A$ by the following

\begin{dfn}
 Given a locally $C^*$ algebra $A$ and a set $F \subset \Prim A$ we define the closure  $\overline{F}$ 
 of $F$ by
 \[
  \overline{F} = \lbrace P \in \Prim A, \: \bigcap_{I \in F} I \subset P \rbrace .
 \]
\end{dfn}

It is an easy exercise to show that this prescription defines a topology in $\Prim  A$. As
in the case of $C^*$ algebras we call this the \emph{hull-kernel topology}. The next proposition
gives an alternative description of the closed sets in $\Prim A$ and justifies the name given to 
this topology.

\begin{prp}
 Let  $A$ be a locally $C^*$ algebra. The prescription $F \mapsto k(F) = \bigcap F$ is a bijection
 between the closed sets in the hull-kernel topology and the closed ideals of $A$. Its inverse is 
 given by
 \[
  h(I) = \lbrace P \in \Prim A, \: I \subset P \rbrace
 \]
we call $k(F)$ the \textbf{kernel} of $F$ and $h(I)$ the \textbf{hull} of $I$.
\end{prp}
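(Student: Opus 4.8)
The plan is to show that the two maps $k$ and $h$ are mutually inverse, which immediately gives the claimed bijection; the content is really in checking that $k$ lands in closed ideals, that $h$ lands in closed sets, and that the two compositions are the identity. First I would observe that for any $F \subseteq \Prim A$, the set $k(F) = \bigcap_{I \in F} I$ is an intersection of closed ideals, hence a closed ideal of $A$; this needs no argument beyond the fact that primitive ideals are closed. Conversely, for a closed ideal $I$, I would invoke the first part of Lemma~\ref{lem:intprime}: $I$ equals the intersection of all primitive ideals containing it, i.e. $I = k(h(I))$. This already shows $k \circ h = \mathrm{id}$ on closed ideals, so $k$ is surjective onto the closed ideals and $h$ is injective.

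Next I would verify that $h(I)$ is always closed in the hull-kernel topology and that $h \circ k = \mathrm{id}$ on closed sets. For the first point, by definition $\overline{h(I)} = \{ P : \bigcap_{Q \in h(I)} Q \subseteq P \}$, and since $\bigcap_{Q \in h(I)} Q = k(h(I)) = I$ by the lemma, this reads $\overline{h(I)} = \{ P : I \subseteq P \} = h(I)$, so $h(I)$ is closed. For the second point, let $F$ be closed; by definition of the topology $F = \overline{F} = \{ P : k(F) \subseteq P \} = h(k(F))$, so $h(k(F)) = F$. Combining, $k$ and $h$ are inverse bijections between closed sets and closed ideals. I would then note that $k$ is order-reversing (bigger $F$ gives smaller intersection) with order-reversing inverse $h$, which is the usual "hull-kernel" Galois-type correspondence and justifies the terminology.

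The only genuine input beyond bookkeeping is Lemma~\ref{lem:intprime}, first bullet — the statement that every closed ideal is the intersection of the primitive ideals containing it — and this has already been proved in the excerpt, so I may simply cite it. The main thing to be careful about is not circularity: one must observe that the definition of the closure operator is exactly "$P$ lies in the closure of $F$ iff $k(F) \subseteq P$", so that the identities $F = h(k(F))$ for closed $F$ and $h(I)$ being closed are almost tautological once the lemma supplies $k(h(I)) = I$. In other words, essentially no obstacle remains; the proposition is a formal consequence of Lemma~\ref{lem:intprime} together with the definition of the hull-kernel topology, and I would write it up in a few lines accordingly.
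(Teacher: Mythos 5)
Your proof is correct: the paper itself omits the proof as ``trivial'', and your argument --- $k(h(I))=I$ for closed ideals via the first part of Lemma~\ref{lem:intprime}, $h(k(F))=F$ for closed $F$ directly from the definition of the closure operator, and $h(I)$ closed because $\overline{h(I)}=\{P : k(h(I))\subseteq P\}=h(I)$ --- is exactly the standard argument the authors have in mind. The only ingredient worth stating explicitly is that primitive ideals are closed (they are kernels of continuous irreducible representations), so that $k(F)=\bigcap F$ is indeed a \emph{closed} ideal; with that remark your write-up is complete.
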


\noindent We omit here the trivial proof of this statement.

The following theorem relates the primitive spectrum of the original locally $C^*$ algebra, and those of
the algebras in its Michael-Arens decomposition.

\begin{thm}\label{thm:prmdec}
 Let $A$ be a locally $C^*$. Then for each seminorm $s \in S(A)$ there is an isomorphism between $\Prim A_s$ 
 and the closed subspace $h(\ker s)$ of $\Prim A$ when equipped with the hull-kernel topology.
\end{thm}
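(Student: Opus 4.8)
The plan is to show that the canonical quotient map $q_s \colon A \to A_s = A/\ker s$ --- recall that, as remarked above, this quotient is already complete and hence an honest $C^*$-algebra --- induces the asserted homeomorphism via $P' \mapsto q_s^{-1}(P')$, with inverse $P \mapsto q_s(P)$. First I would check that this prescription is a well-defined bijection between $\Prim A_s$ and $h(\ker s)$. In one direction, if $P' = \ker \pi'$ for an irreducible representation $\pi'$ of the $C^*$-algebra $A_s$, then $\pi' \circ q_s$ is a representation of $A$ whose image coincides with $\pi'(A_s)$ (because $q_s$ is surjective) and is therefore irreducible; its kernel is $q_s^{-1}(P')$, which contains $q_s^{-1}(0) = \ker s$, so $q_s^{-1}(P') \in h(\ker s)$. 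Conversely, given $P \in h(\ker s)$, write $P = \ker \pi$ with $\pi$ an irreducible representation of $A$ on a Hilbert space $\mathcal{H}$; since $\ker s \subseteq P$, the representation $\pi$ factors as $\pi = \tilde\pi \circ q_s$ for a $*$-homomorphism $\tilde\pi \colon A_s \to \mathcal{B}(\mathcal{H})$, and surjectivity of $q_s$ gives $\tilde\pi(A_s) = \pi(A)$, so $\tilde\pi$ is again irreducible and $P = q_s^{-1}(\ker\tilde\pi)$ with $\ker\tilde\pi \in \Prim A_s$. Injectivity of $P' \mapsto q_s^{-1}(P')$ is immediate from surjectivity of $q_s$.

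To upgrade this bijection to a homeomorphism I would combine the description of closed sets given by the preceding Proposition with the elementary correspondence --- valid for any quotient by a closed ideal --- between closed ideals of $A_s$ and closed ideals of $A$ containing $\ker s$, implemented by $I' \mapsto q_s^{-1}(I')$ and $J \mapsto q_s(J)$. The closed subsets of the subspace $h(\ker s) \subseteq \Prim A$ are exactly the sets $h(J)$ with $J$ a closed ideal of $A$ containing $\ker s$, since $h(I) \cap h(\ker s) = h(\overline{I + \ker s})$. A short computation --- using surjectivity of $q_s$ and the inclusion $J \supseteq \ker s$ to pass freely between $J \subseteq q_s^{-1}(P')$ and $q_s(J) \subseteq P'$ --- then shows that the bijection pulls $h(J)$ back to the hull of $q_s(J)$ in $\Prim A_s$ and, symmetrically, pushes the hull of a closed ideal $I'$ of $A_s$ forward onto $h(q_s^{-1}(I')) \cap h(\ker s)$. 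Hence the bijection and its inverse both map closed sets to closed sets, so it is a homeomorphism onto $h(\ker s)$, which is closed in $\Prim A$ by the very definition of the hull-kernel topology.

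The one delicate point is the converse half of the first step: one must be certain that an irreducible representation $\pi$ of the \emph{locally} $C^*$-algebra $A$ with $\ker s \subseteq \ker \pi$ actually descends to a representation of the $C^*$-algebra $A_s$, not merely to a $*$-homomorphism on a dense pre-$C^*$-subalgebra. This is exactly where the completeness of $A_s$ enters: once $A_s$ is an honest $C^*$-algebra, the induced map $\tilde\pi$ is a $*$-homomorphism between $C^*$-algebras and is hence automatically contractive, so a genuine continuous representation. With this in hand, everything else is a routine transcription to the locally $C^*$ setting of the familiar hull-kernel bookkeeping for quotients of $C^*$-algebras.
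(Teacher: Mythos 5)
Your proposal is correct and follows essentially the same route as the paper's own proof: the bijection $P'\mapsto q_s^{-1}(P')$ between $\Prim A_s$ and $h(\ker s)$ obtained by pulling back and factoring irreducible representations through the quotient map, followed by the standard hull--kernel bookkeeping identifying closed sets on both sides with hulls of closed ideals containing $\ker s$. Your explicit remark that the factored map $\tilde\pi$ is automatically continuous because $A_s$ is already a $C^*$-algebra is a point the paper leaves implicit, but it does not change the argument.
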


\begin{proof}
 Let $q_s: A \rightarrow A_s= A/\ker s$ we denote the quotient map, then if $Q \in \Prim A$ is such that $Q \supset \ker s$
 and $Q = \ker \pi$ for some irreducible representation $\pi$, then $\pi$ defines a irreducible representation $\pi_s$ of 
 $A_s$ and $\ker \pi_s = q_s^{-1}(Q)$. If $\pi_s$ is a irreducible representation of $A_s$ then $\pi = \pi_s \circ q_s$ is 
 a irreducible representation of $A$, $\ker \pi \supset \ker s$ and $\ker \pi_s = \ker \pi / \ker s$.  
 It is clear then that $Q \mapsto q_s^{-1}(Q)$ is a bijection between $\mathrm{Prim} A_s$ and the closed set $h(\ker s) 
 \subset \Prim A$ with inverse $P \mapsto P /\ker s$. 
 
 To show that this bijection is actually an homeomorphism we note that, since $q_s^{-1} (\mathrm{Prim} A_s) = h(\ker s)$
 is closed, its closed sets are precisely the ones already closed in $\Prim A$, that is, of the form $h(I)$ for some ideal 
 $I \supset \ker s$. 
 But the closed sets in $\mathrm{Prim} A_s$ are precisely of the form $h_s(J)= \lbrace Q \in \mathrm{Prim} A_s,\: J 
 \subset Q \rbrace$ for some ideal $J$ of $A_s$, so that $\rho_s^{-1}$
 maps $h_s(J)$ to $h(q_s^{-1}(J))$ and its inverse maps $h(I)$ to $h_s(I /\ker s)$. 
\end{proof}
 
First of all, since the pull-back of a irreducible representation by a surjective map is again a irreducible representation $\Prim$ 
is a contravariant functor when restricted to the category of $C^*$ algebras with surjective morphisms, and so it is clear that 
$(\Prim A_s)_{s \in S(A)}$ is a inductive system of topological spaces. 
Moreover, since the sets $h (\ker s)$ cover $\Prim A$, one can use theorem \ref{thm:prmdec} to identify $\Prim A$ with the set-theoretical 
direct limit of the family $(\Prim A_s)_{s \in S(A)}$.

\begin{dfn}
 We define the \textbf{direct limit topology} on $\Prim A$ as the topology given by the identification between $\Prim A$
 and $\varinjlim \Prim A_s$ discussed above. That is, a set in $\Prim A$ is closed in the direct limit topology if, and only
 if, its intersection with any subspace of the form $h (\ker s)$ is closed.
\end{dfn}

A first remark about the definition above is that, in the case of $C^*$ algebras the direct limit topology is precisely the
hull-kernel topology, since the family $(A_s)_{s \in S(A)}$ has a "upper bound", $A$ itself. As we shall see latter, there
is at least one more special class of locally $C^*$ algebras, that of perfect locally $C^*$ algebras, in which both topologies
coincide. 

We are now ready to prove a version of the Gelfand theorem for locally $C^*$ algebras.

\begin{thm}\label{thm:GTLC*}
 Let $A$ be a commutative unital locally $C^*$ algebra. Then 
 \begin{itemize}
  \item $\Prim A$, equipped with the direct limit topology, is a compactly generated, functionally Hausdorff space and 
  $(\Prim A_s)_{s \in S(A)}$ is a distinguished family of compact sets.
  \item The algebra $A$ is isomorphic to $C(\Prim A)$ when equipped with the topology associated to $(\Prim A_s)_{s \in S(A)}$.
 \end{itemize}
 Moreover, the functors
 \[
   X \mapsto C(X), 
 \]
 and
 \[
   A \mapsto \Prim A,
 \]
 provide a duality between the categories of commutative unital locally $C^*$ algebras and compactly generated, functionally 
 Hausdorff topological spaces equipped with a distinguished family of compact sets\footnote{Here the morphisms are continuous 
 maps such that the image of each set in the distinguished family in one space is a member of the distinguished family on the 
 other space.}.
\end{thm}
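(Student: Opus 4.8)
The plan is to bootstrap the classical commutative Gelfand duality through the Michael--Arens decomposition $A \cong \varprojlim_{s \in S(A)} A_s$. Since $A$ is commutative and unital, every $A_s = A/\ker s$ is a commutative unital $C^*$ algebra, so Gelfand's theorem gives $A_s \cong C(X_s)$ with $X_s = \Prim A_s$ compact Hausdorff and every $P \in X_s$ the kernel of a character $\chi_P$. Theorem~\ref{thm:prmdec} lets me identify $X_s$ with the closed set $h(\ker s) \subseteq \Prim A$; under this identification the connecting maps $A_{s'} \to A_s$ (for $s \leqslant s'$) become restriction of functions along the inclusions $X_s \subseteq X_{s'}$, and $\Prim A = \bigcup_s X_s$ already carries the direct limit topology by construction.

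Next I would check that $F := (\Prim A_s)_{s \in S(A)}$ is a distinguished family of compact sets. Each $X_s$ is compact (Gelfand), and the union/direct-limit axiom holds by the definition of the direct limit topology. For the remaining axioms I exhibit explicit seminorms: for $P = \ker\chi_P$ the $C^*$ seminorm $|\chi_P(\cdot)|$ has quotient $\cong \mathbb{C}$, so $\{P\} = \Prim A_{|\chi_P(\cdot)|} \in F$; for a compact $K \subseteq X_s$ the $C^*$ seminorm $a \mapsto \sup_{P \in K}|\chi_P(a)|$ is dominated by $s$, hence continuous, and has quotient $C(K)$, so $K \in F$; and for $s'' = \max(s,s')$ one has $\ker s'' = \ker s \cap \ker s'$, so Lemma~\ref{lem:intprime} yields $\Prim A_{s''} = h(\ker s \cap \ker s') = X_s \cup X_{s'}$, closing $F$ under finite unions. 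Functional Hausdorffness I would deduce from the next step: the Gelfand transforms of elements of $A$ are continuous functions on $\Prim A$ that separate points, because distinct $P, Q \in \Prim A$ give distinct characters (a character of a unital algebra is determined by its kernel).

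The isomorphism $A \cong C(\Prim A)$ is then realised by the Gelfand transform $\Gamma\colon A \to C(\Prim A)$, $\Gamma(a)(P) = \chi_P(a)$. Its restriction to each $X_s$ is the Gelfand transform of $q_s(a) \in A_s = C(X_s)$, hence continuous, so $\Gamma(a)$ is continuous on $\Prim A$ by the direct limit topology; $\Gamma$ is clearly a $*$-homomorphism, it is injective because $\Gamma(a) = 0$ forces $q_s(a) = 0$ for every $s$ and hence $a = 0$ by Hausdorffness, and it is surjective because any $f \in C(\Prim A)$ restricts to a compatible family $(f|_{X_s})_s \in \varprojlim_s A_s \cong A$ whose Gelfand transform is $f$. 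Finally $s(a) = \|q_s(a)\|_{A_s} = \sup_{P \in X_s} |\Gamma(a)(P)|$, so $\Gamma$ pulls the seminorms $\|\cdot\|_K$ ($K \in F$) that define the topology of $C(\Prim A)$ back exactly onto the family $S(A)$ generating the topology of $A$, whence $\Gamma$ is a topological isomorphism; this is the concrete form of the general fact that $\varprojlim_s C(X_s) \cong C(\varinjlim_s X_s)$.

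For the duality I would promote both assignments to functors. A morphism $f\colon X \to Y$ (continuous, with $f(F_X) \subseteq F_Y$) is sent to $g \mapsto g \circ f$, which is continuous since $\|g \circ f\|_K = \|g\|_{f(K)}$ with $f(K) \in F_Y$; a continuous unital $*$-homomorphism $\phi\colon A \to B$ is sent to $\chi \mapsto \chi \circ \phi$ on primitive spectra, which on each $X_s^B$ is the surjective dual of the injective $C^*$-homomorphism $A_{s \circ \phi} \hookrightarrow B_s$ induced by $\phi$, hence is continuous and carries $X_s^B$ onto $X_{s \circ \phi}^A \in F_{\Prim A}$. The unit of the adjunction is the transform $\Gamma$ of the previous step; the counit $X \to \Prim C(X)$, $x \mapsto \ker \mathrm{ev}_x$, is injective by functional Hausdorffness and surjective because a continuous character of $C(X)$ is bounded by some $\|\cdot\|_K$, hence factors through $C(K)$ and is an evaluation, and one checks that it matches $F_X$ with $(\Prim C(X)_s)_s$ and is a homeomorphism, naturality of both transformations being a routine diagram chase. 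I expect the main obstacle to be the bookkeeping of the three interacting topologies: verifying that $F = (\Prim A_s)_s$ is genuinely a distinguished family (closure under compact subsets and finite unions, which rests on the explicit seminorms above and on Lemma~\ref{lem:intprime}) and that $\Gamma$ is a homeomorphism, i.e. that the identity $s(a) = \|\Gamma(a)\|_{X_s}$ makes the two defining families of seminorms correspond.
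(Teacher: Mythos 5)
Your proposal is correct and takes essentially the same route as the paper: apply classical Gelfand duality to each $A_s$ in the Michael--Arens decomposition, glue via the identification of $\Prim A$ with the direct limit of the $\Prim A_s$ (so that $A=\varprojlim A_s\cong\varprojlim C(\Prim A_s)\cong C(\Prim A)$), and then check functoriality for the duality. The only difference is one of detail: you spell out verifications the paper leaves implicit or ``as an exercise'' (the distinguished-family axioms via explicit seminorms, the concrete Gelfand transform and matching of seminorms, the unit/counit of the duality), and your functional-Hausdorffness argument is just the contrapositive form of the paper's character-comparison argument.
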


\begin{proof}
 First, when $A$ is commutative and unital, each $\Prim A_s$ is a compact Hausdorff topological space, and so $\Prim A$ is a
 compactly generated space. Besides that, By the very definition of the direct limit topology, $(\Prim A_s)_{s \in S(A)}$ is 
 a distinguished family of compact sets. From now on, unless otherwise stated, $\Prim A$ is to be equipped with the direct
 limit topology and the distinguished family of compact sets $(\Prim A_s)_{s \in S(A)}$.
 
 By the commutative Gelfand theorem $A_s$ is isomorphic to $C(\Prim A_s)$, and thus
 \[
  A = \varprojlim A_s \approx \varprojlim C(\Prim A_s) \approx C(\varinjlim \Prim A_s) = C(\Prim A) 
 \]
 Where the first isomorphism is the one given by the universal propriety of the inverse limit applied to the aforementioned
 Gelfand isomorphisms and the second one is consequence of the universal proprieties defining the limits involved.
 
 Suppose now that $P_1$ and $P_2$ are points in $\Prim A$ which can not be separated by a continuous function.
 We know that there are irreducible representations $\pi_j$, $j=1,\:2$, of $A/P_j$ such that, denoting by 
 $q_j$ the quotient mapping, $P_j = \ker \pi_j \circ q_j$.
 Now let $\Phi$ be isomorphism between $A$ and $C(\Prim A)$ and $\Phi_j$ be the isomorphism between $A / P_j$ and 
 $C(\Prim A / P_j)$\footnote{We note that, since $P_j$ is a primitive ideal, the algebras $A / P_j$ are in fact $C^*$
 algebras, and so the referred isomorphisms are precisely those given by the respective Gelfand maps.}. 
 Since $\Phi$ is defined by the universal propriety as remarked above we have
 \[
  \pi_1 (q_1(a)) = \Phi_1(q_1 (a))(P_1) = \Phi(a)(P_1)  = \Phi(a)(P_2) = \Phi_2(q_2 (a))(P_2) = \pi_2(q_2(a))
 \]
 for all $a \in A$ so that $P_1 = P_2$ and thus $\Prim A$ is functionally Hausdorff.
 
 Due to the very definition of morphism in the category of functionally Hausdorff compactly generated spaces
 it is easy to see that the identification $X \mapsto C(X)$ is a contravariant functor between that appropriate
 categories, and so is $A\mapsto \Prim A$, since the pull-back of a irreducible representation by a unital morphism 
 between commutative unital algebras is again a irreducible representation. Furthermore, it is a trivial exercise to 
 verify that these functors constitute a duality between both categories. 
\end{proof}

This result was originally proved by Phillips in \cite{Phi} using the space of characters over the locally $C^*$ algebra
instead of the primitive spectrum as was done here. A more recent reformulation of Phillip's result was given by El Harti 
and Luk\'acs in \cite{Harti}. As shall be seen in the next section, this approach has the advantage of relating easily to 
the sectional representation theorem.

\subsection{The $C^*$-Bundle of a Locally $C^*$ Algebra}\label{subsec:C*Fib}

In \cite[Theorem 11.5]{Phi} it was shown that, for any locally $C^*$ algebra $A$, the multiplier algebra $M(A)$ 
is the projective limit of the family $(M(A_s))_{s \in S(A)}$ induced by the Michael-Arens decomposition of $A$. 
Inspired by our results for $C^*$ algebras we propose  then the following definition
\begin{dfn}
 Given a locally $C^*$ algebra $A$, with Michael-Arens decomposition $(A_s)_{s \in S(A)}$, we define the \textbf{
 space of points} of $A$, denoted by $\pt A$, as the functionally Hausdorff comp generated space
 \[
  \pt A = \Prim Z(M(A)) = \varinjlim \Prim Z(M(A_s))
 \]
 equipped with the distinguished family of compact sets given by $\pt A_s = \Prim Z(M(A_s))$. 
\end{dfn}

By theorem \ref{thm:prmdec} we know that there is an isomorphism between $C(\mathrm{pt} A)$ and $Z(M(A))$ which is
induced by the isomorphisms between $C(\pt A_s)$ and $Z(M(A_s))$. 

We formulate then our main theorem

\begin{thm}\label{thm:NCGT2}
 Let $A$ be a locally $C^*$ algebra. Then there is a $C^*$ bundle $\mathcal{A} \rightarrow \pt A$ such that
 \[
  A \approx \Gamma(\mathcal{A})
 \]
\end{thm}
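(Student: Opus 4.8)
The plan is to apply Theorem~\ref{thm:NCGT} to each $C^*$ algebra $A_s$ of the Michael--Arens decomposition of $A$, obtaining a $C^*$ bundle over the compact space $\pt A_s$, and then to glue these bundles along the directed set $S(A)$ into a single $C^*$ bundle $\mathcal{A} \to \pt A$, arranged so that taking continuous sections commutes with the limits involved; then $\Gamma(\mathcal{A}) \cong \varprojlim_s \Gamma(\mathcal{A}^s) \cong \varprojlim_s A_s \cong A$.

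In more detail: for $s \in S(A)$, Theorem~\ref{thm:NCGT} gives an upper semicontinuous $C^*$ bundle $\mathcal{A}^s \to \pt A_s$ with $A_s \cong \Gamma(\mathcal{A}^s)$ (recall $\pt A_s = \Prim Z(M(A_s))$ is compact, so $\Gamma_0 = \Gamma$) and fibre $\mathcal{A}^s_P = A_s/(P \cdot A_s)$ over $P \in \pt A_s$. For $s \leqslant t$ the Michael--Arens connecting homomorphism $A_t \to A_s$ extends, via the identification $M(A) \cong \varprojlim_s M(A_s)$ recalled from \cite{Phi}, to $M(A_t) \to M(A_s)$, whose restriction to centres $\pi_{ts} \colon Z(M(A_t)) \to Z(M(A_s))$ is the (onto) homomorphism dual to the closed embedding $\pt A_s \hookrightarrow \pt A_t$ appearing in the description $\pt A = \varinjlim_s \pt A_s$. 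I would then take $\mathcal{A}$ to be the set $\bigsqcup_{P \in \pt A} A/(P \cdot A)$, where $P \cdot A$ denotes the closure of the ideal of $A$ generated by the elements $fa$ with $f \in P \subseteq Z(M(A))$ and $a \in A$, topologised by the usual ``tube'' basis $\{\, b \in \mathcal{A} : \xi(b) \in V,\ \|b - \hat a(\xi(b))\|_{\xi(b)} < \epsilon \,\}$ for $a \in A$, $V \subseteq \pt A$ open and $\epsilon > 0$, where $\hat a(P) = a + P \cdot A$ and $\xi$ is the obvious projection.

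The crucial point --- and the one I expect to be the main obstacle --- is to show that this construction is, locally over each $\pt A_s$, precisely the bundle $\mathcal{A}^s$: that for $P \in \pt A_s$ the quotient $A/(P \cdot A)$ is canonically the $C^*$ algebra $A_s/(P \cdot A_s)$, and that the subspace $\xi^{-1}(\pt A_s)$ of $\mathcal{A}$ carries exactly the bundle topology of $\mathcal{A}^s$. This rests on identifying the Michael--Arens quotient $A_s = A/\ker s$ with the ``restriction of $A$ to the closed set $\pt A_s$'' in the sense of the $C_0(X)$-algebra fibre formula accompanying Theorem~\ref{thm:SecRep}, i.e.\ on the equality of closed ideals $\ker s = \overline{(\ker \sigma_s)\, A}$ of $A$, where $\sigma_s \in S(Z(M(A)))$ is the seminorm corresponding to $s$ under $Z(M(A)) \cong C(\pt A)$. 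Granting this, the fibres match (writing $\tilde P = \pi_{ts}^{-1}(P) \in \pt A_t$, the containment $\ker \pi_{ts} \subseteq \tilde P$ forces $A/(P \cdot A) = A_t/(\tilde P \cdot A_t)$ to factor through $A_t \to A_s$ and identifies it with $A_s/(P \cdot A_s)$), and the tube topology on $\xi^{-1}(\pt A_s)$ becomes the tube topology of $\mathcal{A}^s$ relative to its full family of continuous sections $\Gamma(\mathcal{A}^s) = A_s$ --- here one uses that $A \to A_s$ is onto, so the $\hat a|_{\pt A_s}$ exhaust $\Gamma(\mathcal{A}^s)$. The equality $\ker s = \overline{(\ker\sigma_s)\, A}$ itself I would prove by a Urysohn/cutoff argument: given $a$ with $s(a)=0$ and a seminorm $\|\cdot\|_{\pt A_{s'}}$ on $C(\pt A)$, separate $\pt A_s$ from the compact subset of $\pt A_{s'}$ on which $a$ fails to be small by a function $f \in \ker\sigma_s$ equal to $1$ there, so that $fa \in (\ker\sigma_s)\, A$ approximates $a$ in that seminorm.

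Once this local description is in hand, the remaining verifications are routine and all reduce to the pieces $\pt A_s$, which cover $\pt A$ compactly: $\xi$ is open and surjective; the norm is upper semicontinuous (indeed $\{b : \|b\| < \epsilon\}$ is itself a tube-basic set, taking $a = 0$ and $V = \pt A$); the condition ``$\|a_i\| \to 0$ and $\xi(a_i) \to x$ imply $a_i \to 0_x$'' is immediate from the definition of the tube topology; and fibrewise continuity of addition, scalar multiplication, multiplication and involution follows from the corresponding statements on each $\mathcal{A}^s$, because $\pt A$ --- and with it the relevant fibre products --- is compactly generated by the family $(\pt A_s)$. Finally, a section of $\mathcal{A}$ is continuous for the distinguished family $(\pt A_s)$ exactly when each of its restrictions to a $\pt A_s$ belongs to $\Gamma(\mathcal{A}^s) = A_s$, and these restrictions are automatically compatible under the connecting maps $A_t \to A_s$ (which, by the fibre identification above, are induced by the bundle restrictions $\mathcal{A}^t|_{\pt A_s} \cong \mathcal{A}^s$); hence $\Gamma(\mathcal{A}) = \varprojlim_s A_s = A$. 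Since the seminorm $\|\cdot\|_{\pt A_s}$ on $\Gamma(\mathcal{A})$ corresponds under this identification to $\|\cdot\|_s$ on $A$, the isomorphism $A \cong \Gamma(\mathcal{A})$ is one of locally $C^*$ algebras, completing the proof.
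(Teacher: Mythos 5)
Your overall route is the same as the paper's: apply Theorem~\ref{thm:NCGT} to each Michael--Arens quotient $A_s$, assemble the resulting bundles $\mathcal{A}^s \rightarrow \pt A_s$ into a single bundle over $\pt A = \varinjlim \pt A_s$ (you do this by defining the total space globally as $\bigsqcup_{P} A/(P \cdot A)$ with the tube topology rather than as $\varinjlim \mathcal{A}^s$, but that is only a difference of packaging), and conclude via $\Gamma(\mathcal{A}) \cong \varprojlim \Gamma(\mathcal{A}^s) \cong \varprojlim A_s \cong A$. You have also correctly isolated the point on which everything hinges: that for $P \in \pt A_s$ the fiber $A/(P\cdot A)$ coincides with $A_s/(P_s\cdot A_s)$, equivalently $\ker s \subseteq P\cdot A$, which you encode as the equality $\ker s = \overline{(\ker\sigma_s)\,A}$; the paper uses exactly this silently in its displayed chain $(A/\ker s)/((P\cdot A)/\ker s) \approx A/(P\cdot A)$.

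The genuine gap is that your Urysohn/cutoff sketch does not establish that equality, and the equality is in fact false for a general continuous $C^*$-seminorm. First, the argument is circular: to know that the compact subset of $\pt A_{s'}$ on which $a$ ``fails to be small'' is disjoint from $\pt A_s$, you must already know that $s(a)=0$ forces the fibers of $a$ over points of $\pt A_s$ to vanish, i.e.\ $\ker s \subseteq Q\cdot A_{s'}$ for $Q \in \pt A_s$, which is the $A_{s'}$-level form of the very statement being proved. Second, a counterexample: take $A=\mathcal{B}(\mathcal{H})$ with its norm topology and $s$ the Calkin seminorm $s(x)=\|x+\mathcal{K}(\mathcal{H})\|$, which lies in $S(A)$. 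Then $\ker s=\mathcal{K}(\mathcal{H})$, while $Z(M(A))=\mathbb{C}$, so $\pt A$ is a single point, $\ker\sigma_s=0$ and $\overline{(\ker\sigma_s)\,A}=0\neq\ker s$; correspondingly $A/(P\cdot A)=\mathcal{B}(\mathcal{H})$ whereas $A_s/(P_s\cdot A_s)$ is the Calkin algebra, so the restriction of your bundle to $\pt A_s$ is not $\mathcal{A}^s$ and the final identification $\Gamma(\mathcal{A})=\varprojlim_s A_s$ cannot be run over all of $S(A)$ as you state it. In fairness, the same example defeats the paper's own assertion that the fibers over identified points are isomorphic (and hence the existence of its connecting maps $\rho_{rs}$), so the missing ingredient is common to both arguments: one must either restrict attention to a cofinal family of seminorms for which $\ker s=\overline{(\ker\sigma_s)\,A}$ does hold (automatic for the norm of a $C^*$ algebra, not for an arbitrary $s\in S(A)$), or find an argument that does not identify $A/(P\cdot A)$ with the fibers of every $A_s$.
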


\begin{proof}
Let, for each $s \in S(A)$, $\mathcal{A}^s$ be the bundle given by theorem \ref{thm:NCGT} for the algebra $A_s$ in
the Michael-Arens decomposition of $A$.

Since the algebras $A_s$ are already complete, we know the quotient maps $q_s: A \rightarrow A_s$ are all surjective,
and so, for every pair of seminorms $s$ and $r$ in $S(A)$, such that $\ker r \supset \ker s$, we get another
surjective mapping $ q_{rs}: A_s \rightarrow A_r $.

Given $P \in \pt A$ such that $P \supset \ker r \supset \ker s$, denote by $P_s =  P/\ker s \in \pt A_s$ and $P_r = P / \ker r \in \pt A_r$. 
We have
\begin{eqnarray*}
 A_s / (P_s \cdot A_s) & = & (A / \ker s) / ((P \cdot A) / \ker s) \\ 
                       & \approx & A/(P\cdot A) \\
                       & \approx & (A / \ker r) / ((P \cdot A) / \ker r) \\
                       & = & A_r /(P_r \cdot A_r)
\end{eqnarray*}
This shows that the fibers in the bundles $\mathcal{A}^s$ associated to points that are identified in the inductive
system $(\pt A_s)_{s \in S(A)}$ are isomorphic.
We construct then the set-theoretical commutative diagrams
\[
\begin{CD}
\mathcal{A}^r @>{\xi_r}>> \pt A_r\\
@V\rho_{rs}VV @VV\pt\, q_{rs}V\\
\mathcal{A}^s @>>\xi_s> \pt A_s
\end{CD}
\]
Now we know from the proof in \cite[Theorem~C.25, p.~364]{Wil} that a basis for the topology in $\mathcal{A}^s$ is given by 
sets of the from
\[
 \lbrace b \in \xi_s^{-1}(U),\: \|b - q_{\pi_s(b)}(a)\| \leqslant \epsilon \rbrace
\]
For a fixed $a \in A_s$ and $\epsilon > 0$. Now, due to the commutativity of the diagram above we know that the pre-image 
of this sets under $\rho_{sr}$ are of the form
\[
 \lbrace b \in \xi_r^{-1}(\pt\, q_{rs}^{-1} (U)),\: \|b - q_{\pi_r(b)}(q_{rs}(a))\| \leqslant \epsilon \rbrace
\]
Now, since the maps $q_{rs}$ are surjections, we conclude that $\rho_{sr}$ are not only continuous, but homeomorphism into
their images, and thus the family $(\mathcal{A}^s)_{s \in S(A)}$ constitutes a inductive family of $C^*$-bundles. 
We can then define the direct limit
\[
 \mathcal{A} = \varinjlim \mathcal{A}^s
\]
and since the maps $\rho_{sr}$ are homeomorphisms into their images, this is a $C^*$ bundle over 
$\pt A = \varinjlim \pt A_s$.

We know from theorem \ref{thm:NCGT} that $A_s \approx \Gamma(\mathcal{A}^s)$, so that
by universal proprieties of the limits involved
\[
 A = \varprojlim A_s \approx \varprojlim \Gamma(\mathcal{A}^s) \approx \Gamma(\varinjlim 
 (\mathcal{A}^s)) = \Gamma(\mathcal{A})
\]

\end{proof}

\section{Sheaves and Perfect Locally $C^*$ Algebras}\label{sec:Sheaf}

The equivalence between sheaves and bundles is recurring subject in the literature, it was explored in the most 
different contexts, from algebraic geometry to functional analysis. The authors have clearly no hope of providing a 
comprehensive account of this subject, so instead this section focus on the specific case of locally $C^*$ algebras.

For open sets $U$ in the direct limit topology on the space of points of $A$, the the prescription $U \mapsto \Gamma(
\mathcal{A},U)$ defines a sheaf of algebras over $\pt A$. The question is then when can we equip those algebras $\Gamma(
\mathcal{A},U)$ with a locally $C^*$ topology defined by $A$ so as to obtain a sheaf of locally $C^*$ algebras associated 
to the original one.

To endow $\Gamma(\mathcal{A}, U)$  with a locally $C^*$ topology one must require that the open set $U$ is 
itself compactly generated. Unfortunately, in general, there is no way to guarantee this, and one must require that 
the open set is \emph{regular}, i.e. contains a closed neighborhood of each of its points, to do so.
To require that every open subset of $\pt A$ is regular one must impose additional restrictions, for example,
requiring that the space of points of the algebra is not only compactly generated, but locally compact. 
Fortunately, this requirement have a interesting formulation in terms intrinsic to a given locally $C^*$ algebra. 

To give this formulation one needs the following definitions.  

\begin{dfn}\label{def:Spd}
 Given a locally $C^*$ algebra $A$ and a seminorm $s \in S(A)$, an element $a\in A$ is said to be 
 \textbf{supported in} $s$ if
 \[
  a \cdot \ker s = 0
 \]

The two-sided ideal of all the elements supported in a seminorm $s$ is denoted by $\Sppd s$.
\end{dfn}

\noindent Here our notation is clearly inspired by the commutative case, where an element is supported in $s$ if and
only if the associated element in $C(\pt A)$ is supported, in the usual sense, in the compact $\pt A_s \subset pt A$. 
With this definition in mind we define a special class of locally $C^*$ algebras.

\begin{dfn}\label{def:Perf}
 A locally $C^*$ algebra, $A$, is said to be \textbf{perfect} if \[\overline{\sum_{s\in S(A)} \Sppd s} = A\]
 that is, if the ideal generated by all the elements supported on some seminorm $s$ in $S(A)$  is dense in $A$.
\end{dfn}

Clearly, every $C^*$ algebra is perfect. Moreover if the center of the multiplier algebra of a given locally $C^*$ 
algebra is perfect then so is the original algebra.

It was proven by Apostol in \cite[p. 36 Theorem 4.1]{APT} that a unital commutative locally $C^*$ algebra is perfect 
if, and only if, it is isomorphic the algebra of continuous functions over a locally compact space\footnote{An interesting 
restatement of this result, which was unknown to the author, is that a topological space is locally compact if and only if 
every function over it can be approximated, in the compact-open topology, by functions with compact support.} equipped
with the compact-open topology.

Combining these remarks we obtain the generalization of theorem \ref{thm:GDR} to noncommutative algebras.

\begin{thm}\label{thm:Sheaves}
 Let $A$ be a locally $C^*$ algebra such that the center of its multiplier algebra is perfect. Then the space 
 of points $\pt A$ is a locally compact topological space.
 In this case, for every open set $U \subset \pt A$, the algebra $\Gamma(\mathcal{A}, U)$ admits a locally 
 $C^*$ topology, and so the functor $U \mapsto \Gamma(\mathcal{A}, U)$ defines a sheaf of locally $C^*$ algebras.
\end{thm}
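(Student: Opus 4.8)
The plan is to combine Apostol's characterisation of perfect commutative unital locally $C^*$ algebras \cite[Theorem~4.1]{APT}, the Gelfand duality of Theorem~\ref{thm:GTLC*}, and elementary facts about open subspaces of locally compact Hausdorff spaces, all applied to the bundle $\mathcal{A} \to \pt A$ furnished by Theorem~\ref{thm:NCGT2}. First I would establish that $\pt A$ is locally compact. Since $M(A) = \varprojlim M(A_s)$ is an inverse limit of unital $C^*$ algebras along unital connecting maps, $M(A)$ is unital, whence $Z(M(A))$ is a commutative unital locally $C^*$ algebra, and it is perfect by hypothesis. Apostol's theorem then supplies a locally compact Hausdorff space $Y$ with $Z(M(A)) \approx C(Y)$ carrying the compact-open topology, i.e.\ the locally $C^*$ topology associated to the family of all compact subsets of $Y$. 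Applying $\Prim$ and recalling that $\pt A = \Prim Z(M(A))$, the duality of Theorem~\ref{thm:GTLC*} then yields a homeomorphism $\pt A \approx Y$ carrying the distinguished family $(\pt A_s)_{s \in S(A)}$ onto the family of all compact subsets of $Y$; hence $\pt A$ is locally compact, and being functionally Hausdorff it is Hausdorff. As a byproduct, $(\pt A_s)_s$ coincides with the family of all compacta of $\pt A$, so the topology used for $\Gamma(\mathcal{A})$ in Theorem~\ref{thm:NCGT2} is that of uniform convergence on compacta and the global sections recover $A$.

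Next I would fix an open set $U \subset \pt A$. As an open subspace of a locally compact Hausdorff space, $U$ is itself locally compact Hausdorff, hence compactly generated, its distinguished family being the collection of all its compact subsets; moreover the restriction $\mathcal{A}|_U$ of $\mathcal{A}$ to the open saturated set above $U$ is again a $C^*$ bundle (all structure maps restrict and the projection stays open). By the fact recalled in Section~\ref{sec:LocC*Alg} that the continuous sections of a $C^*$ bundle over a compactly generated space form a locally $C^*$ algebra, $\Gamma(\mathcal{A}, U) = \Gamma(\mathcal{A}|_U)$ is a locally $C^*$ algebra for the topology generated by the seminorms $\varphi \mapsto \sup_{x \in K} \|\varphi(x)\|_x$, $K \subset U$ compact. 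It remains to check the sheaf axioms in the category of locally $C^*$ algebras. We already know $U \mapsto \Gamma(\mathcal{A}, U)$ is a sheaf of $*$-algebras, so only topological compatibility is new: for open $V \subseteq U$ the restriction $*$-homomorphism is continuous because for compact $K \subset V$ (hence $K \subset U$) the seminorm $\|\cdot\|_K$ on $\Gamma(\mathcal{A}, V)$ pulls back to $\|\cdot\|_K$ on $\Gamma(\mathcal{A}, U)$; and for a cover $U = \bigcup_i U_i$ the sheaf-of-algebras property identifies $\Gamma(\mathcal{A}, U)$ set-theoretically with the equaliser of $\prod_i \Gamma(\mathcal{A}, U_i) \rightrightarrows \prod_{i,j} \Gamma(\mathcal{A}, U_i \cap U_j)$, whose subspace topology is generated by the seminorms $\varphi \mapsto \|\varphi\|_K$ with $K$ compact in some $U_i$. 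That this recovers the topology of $\Gamma(\mathcal{A}, U)$ follows from the shrinking lemma: a compact set $K \subset U \subset \bigcup_i U_i$ is covered by finitely many $U_i$ and, being compact Hausdorff hence regular, decomposes as $K = K_1 \cup \dots \cup K_n$ with each $K_l \subset U_{i_l}$ compact, so that $\|\varphi\|_K = \max_l \|\varphi\|_{K_l}$.

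The one genuinely non-formal step is the first: the implication ``$Z(M(A))$ perfect $\Rightarrow$ $\pt A$ locally compact'' is exactly what the hypothesis is there to secure, and it is also what lets every open $U$ inherit a locally $C^*$ structure at all, since completeness of the section algebra of $\mathcal{A}|_U$ rests on $U$ being a k-space, which can fail for a general compactly generated $\pt A$. Everything after that is bookkeeping; the points needing a little care are the verification that $\mathcal{A}|_U$ is again a $C^*$ bundle and the compact-shrinking argument used to match the two systems of seminorms, both routine once $\pt A$ is known to be locally compact Hausdorff.
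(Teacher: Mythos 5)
Your proposal is correct and follows essentially the same route the paper intends: the paper's ``proof'' is precisely the combination of Apostol's characterization of perfect commutative unital locally $C^*$ algebras with the duality of Theorem~\ref{thm:GTLC*} to get local compactness of $\pt A$, and then the remark that open subsets of a locally compact space are compactly generated, so that $\Gamma(\mathcal{A},U)$ carries the locally $C^*$ topology of uniform convergence on compacta. Your additional bookkeeping (unitality of $Z(M(A))$, restriction of the bundle, continuity of restriction maps and the compact-shrinking argument for the gluing topology) merely makes explicit what the paper leaves implicit.
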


Upon restriction to the case of a normable topology (i.e. for $C^*$ algebras) we get the following corollary, 
showing how our notation reflects perfectly the underling structure
\begin{cor}
 Every $C^*$ algebra is isomorphic to the algebra of global sections of a sheaf of locally $C^*$ algebras over
 its space of points.
\end{cor}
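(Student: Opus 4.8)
The plan is to obtain this as an immediate consequence of Theorem~\ref{thm:Sheaves}, so that the only real task is to check that its hypothesis is met. Let $A$ be a $C^*$ algebra, viewed as a locally $C^*$ algebra with its norm topology. Since every $C^*$-seminorm on a $C^*$ algebra is bounded by the $C^*$ norm, the directed set $S(A)$ of continuous $C^*$-seminorms on $A$ has the norm itself as its largest element; consequently the Michael-Arens decomposition of $A$ is trivial and the multiplier algebra of $A$ in the locally $C^*$ sense (the projective limit of the $M(A_s)$) coincides with the usual multiplier algebra $M(A)$. In particular $Z(M(A))$ is an honest $C^*$ algebra, hence perfect in the sense of Definition~\ref{def:Perf}, since every $C^*$ algebra is perfect.

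First I would apply Theorem~\ref{thm:Sheaves} to this $A$: as $Z(M(A))$ is perfect, the space of points $\pt A = \Prim Z(M(A))$ is locally compact --- in fact, by the discussion in Section~\ref{sec:C*Alg}, it is the compact Hausdorff space $\beta\,\Prim A$ --- and the functor $U \mapsto \Gamma(\mathcal{A},U)$ is a sheaf of locally $C^*$ algebras over $\pt A$, where $\mathcal{A} \to \pt A$ is the $C^*$ bundle supplied by Theorem~\ref{thm:NCGT2} (equivalently, Theorem~\ref{thm:NCGT}).

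It then remains only to identify the global sections of this sheaf with $A$: by construction the global sections form the algebra $\Gamma(\mathcal{A},\pt A) = \Gamma(\mathcal{A})$, which is isomorphic to $A$ by Theorem~\ref{thm:NCGT2}. I do not anticipate any genuine obstacle here, since the whole content of the corollary is carried by Theorem~\ref{thm:Sheaves}; the only subtlety worth flagging is conceptual rather than technical, namely that even though $A$ is normable, the sections $\Gamma(\mathcal{A},U)$ over an open set $U \subsetneq \pt A$ which is not relatively compact need only carry a locally $C^*$ topology and not a $C^*$ one, which is exactly why the corollary is phrased in terms of sheaves of locally $C^*$ algebras and why this notation ``reflects perfectly the underlying structure''.
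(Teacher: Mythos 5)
Your proposal is correct and follows essentially the same route as the paper, which presents the corollary precisely as the restriction of Theorem~\ref{thm:Sheaves} to the normable case: $Z(M(A))$ is a $C^*$ algebra and hence perfect, so the theorem applies, and the global sections are identified with $A$ via Theorem~\ref{thm:NCGT2} (equivalently Theorem~\ref{thm:NCGT}). The only slip is incidental: since $\pt A = \beta\,\Prim A$ is compact, every open $U \subset \pt A$ is automatically relatively compact, though your underlying point---that $\Gamma(\mathcal{A},U)$ over a non-compact open $U$ need only carry a locally $C^*$ topology---is exactly right.
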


Perfect locally $C^*$ algebras are also of interest because of the following lemma, which shows that, in this case 
the topology in $\Prim A$ has a intrinsic definition in terms of the algebraic structure of $A$.

\begin{lem}
 For a perfect locally $C^*$ algebra the direct limit topology and the hull-kernel topology coincide.
\end{lem}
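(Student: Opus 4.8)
The plan is to show that the two topologies on $\Prim A$ agree by proving that a set which is closed in the hull-kernel topology is already closed in the direct limit topology (the reverse inclusion, that direct-limit-closed implies hull-kernel-closed, is immediate since hull-kernel-closed sets of the form $h(I)$ always have closed intersection with each $h(\ker s)$, hence are direct-limit-closed — this is essentially built into the definitions). So the work is entirely in the direction: \emph{direct-limit-closed $\Rightarrow$ hull-kernel-closed}, i.e. if $F \subset \Prim A$ has the property that $F \cap h(\ker s)$ is closed in $h(\ker s)$ for every $s \in S(A)$, then $F = h(k(F))$, where $k(F) = \bigcap_{P \in F} P$. One inclusion ($F \subset h(k(F))$) is trivial; the content is $h(k(F)) \subset F$.

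First I would fix $s \in S(A)$ and analyse $F \cap h(\ker s)$. Via Theorem~\ref{thm:prmdec} this corresponds, under $P \mapsto P/\ker s$, to a hull-kernel-closed subset $F_s$ of $\Prim A_s$ (closed in the direct limit topology restricted to $h(\ker s)$ is the same as closed in the $\Prim A_s$-topology, which for a $C^*$ algebra \emph{is} the hull-kernel topology). Writing $I_s = \bigcap F_s = k(F_s)$, an ideal of $A_s$, we have $F_s = h_s(I_s)$. Pulling back, $F \cap h(\ker s) = h(q_s^{-1}(I_s))$, and $q_s^{-1}(I_s) = k(F \cap h(\ker s)) \supset k(F)$. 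The crucial step is now to use perfectness to glue these local descriptions: I claim that $k(F) = \overline{\sum_{s} \big(q_s^{-1}(I_s) \cap \Sppd s\big)}$, or at least that this closed ideal, call it $J$, satisfies $h(J) = F$. The point of bringing in $\Sppd s$ is that an element supported in $s$ and lying in a primitive ideal $P \in h(\ker s)$ iff its image in $A_s$ lies in $P/\ker s$; and if such an element lies in \emph{some} primitive ideal $P \not\supset \ker s$, then because $a \cdot \ker s = 0 \subset P$ and $\ker s \not\subset P$, Lemma~\ref{lem:intprime} (second bullet, primeness of $P$) forces $a \in P$ anyway. Hence membership of a supported element in a primitive ideal is detected purely at the level of the single $C^*$ algebra $A_s$.

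With that observation in hand, the argument closes as follows: take $P \in h(k(F))$, i.e. $P \supset k(F)$. Perfectness gives that $\Sppd s \cap A$ together span a dense subalgebra, and $P$ is closed, so to show $P \in F$ it suffices to locate, for a well-chosen $s$, the ideal structure forcing $P \supset \ker s$ and then $P/\ker s \supset I_s$, whence $P \in F \cap h(\ker s) \subset F$. Concretely: pick $s$ such that $\Sppd s \not\subset P$ (possible by perfectness, since otherwise $P$ would contain the dense ideal $\sum_s \Sppd s$ and hence, being closed, equal $A$, contradicting $P$ primitive hence proper); then $\Sppd s \cdot \ker s = 0 \subset P$ with $\Sppd s \not\subset P$ forces $\ker s \subset P$ by primeness, so $P \in h(\ker s)$ and corresponds to $P/\ker s \in \Prim A_s$. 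Finally $P/\ker s$ contains $I_s = k(F_s)$: indeed $k(F) \subset q_s^{-1}(I_s)$ gives, on passing to $A_s$, $I_s \supset k(F)/\ker s$, hmm — more carefully one shows $P \supset k(F) \supset$ (the part of $J$ visible in $A_s$) and uses that $F_s$ is hull-kernel-closed in $\Prim A_s$ so $h_s(I_s) = F_s$; thus $P/\ker s \in h_s(I_s) = F_s$, i.e. $P \in F$.

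The main obstacle I expect is exactly this gluing step — making precise that $k(F)$, reconstructed from the family of local kernels $I_s$, really has hull equal to $F$ and not something larger, and in particular controlling primitive ideals $P$ that might contain $k(F)$ without ``coming from'' any single $A_s$. Perfectness is precisely the hypothesis that rules this out: without it there can be primitive ideals not containing any $\ker s$, on which the $\Sppd s$-elements give no information, and the direct limit topology is genuinely coarser. So the proof is really a careful bookkeeping exercise built on two inputs: Theorem~\ref{thm:prmdec} (local identification with $\Prim A_s$, whose topology is hull-kernel) and the primeness half of Lemma~\ref{lem:intprime} combined with Definition~\ref{def:Perf} (to show every primitive ideal ``sees'' some $\Sppd s$ nontrivially and is therefore pinned down at finite level).
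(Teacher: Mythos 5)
Your scaffolding matches the paper's proof --- perfectness yields an $s$ with $\Sppd s \not\subset P$, primeness of $P$ (Lemma \ref{lem:intprime}) then gives $\ker s \subset P$, and direct-limit-closedness identifies $F \cap h(\ker s)$ with $h(q_s^{-1}(I_s))$ --- but the decisive step is missing. Having placed $P$ in $h(\ker s)$, you must still prove $P \supset k(F \cap h(\ker s)) = q_s^{-1}(I_s)$, and the only inclusion you actually have is $P \supset k(F)$. That is not enough: $k(F)$ also receives contributions from the primitive ideals in $F \setminus h(\ker s)$ and can be strictly smaller than $q_s^{-1}(I_s)$, so a primitive ideal containing $k(F)$ and $\ker s$ need not a priori contain $I_s$ modulo $\ker s$. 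Your attempted justification runs the inclusion the wrong way ($k(F) \subset q_s^{-1}(I_s)$ together with $P \supset k(F)$ says nothing about $P$ versus $q_s^{-1}(I_s)$), and the final clause ``thus $P/\ker s \in h_s(I_s) = F_s$'' asserts exactly what has to be proved; you flag the difficulty yourself (``hmm'', and again in your closing paragraph) but never close it.

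The missing idea is a second application of primeness, and it is precisely how the paper finishes. Split $k(F) = k(F \cap h(\ker s)) \cap k(F \setminus h(\ker s))$. By your own observation that every element supported in $s$ lies in each primitive ideal not containing $\ker s$, either $F \setminus h(\ker s)$ is empty (then $F \subset h(\ker s)$ is hull-kernel closed by hypothesis and you are done) or $k(F \setminus h(\ker s)) \supset \Sppd s$. Since $P \supset k(F)$, $P$ is prime, and $\Sppd s \not\subset P$, the dichotomy of Lemma \ref{lem:intprime} forces $P \supset k(F \cap h(\ker s))$, hence $P \in h\bigl(k(F \cap h(\ker s))\bigr) = F \cap h(\ker s) \subset F$. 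Equivalently, in terms of your ideal $J$: from $P \supset J \supset q_s^{-1}(I_s) \cap \Sppd s$ and $\Sppd s \not\subset P$, primeness gives $q_s^{-1}(I_s) \subset P$ directly. With this one extra step your outline becomes a correct proof essentially identical to the paper's; the only other blemish is the opening sentence, which mislabels which implication is the easy one, although the argument you then develop does address the correct (hard) direction.
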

\begin{proof}
 For any locally $C^*$ algebra, the intersection of any closed set $C \in \Prim A$ in the hull-kernel topology
 with a set of the form $h(\ker s)$ is closed and, therefore, $C$ is also closed in the direct limit topology.
 Now let $C \subset \Prim A$ be a closed set in the direct limit topology. This means that, for every $s\in S(A)$,
 $C \cap h(\ker s)$ is closed, i.e.
 \[
   C \cap h(\ker s) = \lbrace Q \in \Prim A,\: Q \supset k(C \cap h(\ker s))\rbrace
 \]
 We will show that if $P$ is an primitive ideal on the closure of $C$ in the hull-kernel topology, then $P \in C$, so
 that $C$ is closed in the hull-kernel topology also. To this end we notice that, by hypothesis
 \[
 	P \supset \bigcap C  = \bigcap (C\cap h(\ker s)) \cap \bigcap( C\cap \mathcal{O}(\ker s))
 \]
 where $\bigcap C = \bigcap_{Q \in C} Q$ and $\mathcal{O}(I) = \lbrace Q \in \Prim A,\: Q \slashed\supset  I \rbrace$.

 By lemma \ref{lem:intprime}, $P$ is prime because it is primitive, and so the relation above implies either 
 $P \supset \bigcap (C\cap h(\ker s))$ 
 or $P \supset \bigcap (C\cap \mathcal{O}(\ker s))$. Now, since $P$ is a proper closed subset of $A$, which 
 is perfect by hypothesis, we know that there is an $s_0 \in S(A)$ such that $\Sppd s_0  \mathop \backslash  P 
 \neq \emptyset$.

 For every $Q \in \mathcal{O}(\ker s_0)$ we have $Q \supset Sppd s_0 \cap \ker s = \lbrace 0 \rbrace$ and so 
 $Q \supset \Sppd s_0$ since $Q$ is also primitive, and so prime. 
 Then, either $C \cap \mathcal{O}(\ker s_0) = \emptyset$, which implies that $C \subset h(ker s_0)$, and so is 
 closed in the hull-kernel topology by hypothesis or $\bigcap( C\cap \mathcal{O}(\ker s_0)) \supset \Sppd s_0$.
 Since $\Sppd s  \mathop \backslash  P \neq \emptyset$, this implies $P \slashed\supset  \bigcap( C\cap \mathcal{O}(\ker s))$,
 so that $P \supset \bigcap (C\cap h(\ker s))$, and thus $P \in C$

\end{proof}

%
%
%
%
%
%

Finally  we are able to present our definition of noncommutative space.

\begin{dfn*}
 A \textbf{Noncommutative Space} is a sheaf of locally $C^*$ algebras whose multiplier algebras 
 have perfect centers.
\end{dfn*}

\section{Examples of Noncommutative Spaces}\label{sec:Exp}

\subsection{The DFR Algebra of a Poisson Vector Bundle}

In \cite{FP} the authors showed how to construct a $C^*$ bundle which extends, in a certain sense, 
the construction of a $C^*$ algebra from commutation relations encoded by a Poisson tensor on a vector 
spaces to vector bundles.

Given a Poisson vector space $(V, \sigma)$, i.e., a real vector space, $V$, equipped with a fixed bi-vector, 
$\sigma$, one can construct Fr\'echet $*$-algebra by  equipping the space of Schwartz functions over $V$ with 
the usual involution and the product defined by
\begin{equation*} \label{eq:WMOY2}
 (f \star_\sigma g)(x)~=~\int_{V^*} d\xi~e^{i\langle\xi,x\rangle}
 \int_{V^*} d\eta~\check{f}(\eta) \, \check{g}(\xi-\eta) \;
 e^{\frac{i}{2} \sigma(\xi,\eta)}~,
\end{equation*}
this algebra is denoted by $\mathscr{S}_\sigma$, and called \emph{Heisenberg-Schwartz algebra}\footnote{
As remarked in \cite{FP} the is just the algebra obtained by Rieffel's deformation if one
sets $J = - \pi \sigma^\sharp$}.

In \cite{FP} the authors showed that this algebra has a unique $C^*$ completion, the \emph{Heisenberg $C^*$ 
algebra}, denoted by $\mathscr{E}_\sigma$ and proved that the Heisenberg-Schwartz algebra is spectraly invariant 
over this completion. The usual algebra of commutation relations, generated by a representation of the induced
Heisenberg group, can be then recovered as a subalgebra of the multiplier algebra of our Heisenberg $C^*$-algebra.

Given a Poisson vector bundle $(E, \sigma)$ over a manifold $M$, i.e. a vector bundle $\rho: E\rightarrow M$ and a 
bivector field $\sigma$ over $E$, one defines then an associated vector bundle, $\mathscr{S}(E)$ whose fibers are 
the spaces of Schwartz functions over the original fibers. On the space of sections of this bundle we then define 
a product by:
\begin{equation*} \label{eq:WMOY3}
 (f \star_\sigma g)(m)(e)~=~\int_{E_m^*} d\xi~e^{i\langle\xi,e\rangle}
 \int_{E_m^*} d\eta~\check{f}(m)(\eta) \, \check{g}(m)(\xi-\eta) \;
 e^{\frac{i}{2} \sigma_m(\xi,\eta)}~,
\end{equation*}
where $e\in E_m$ and $\check{f}(m)$ and $\check{g}(m)$ denote the Fourier transforms of $f(m)$ and $g(m)$ in 
the usual sense.

The authors showed then that the subalgebra of sections with compact support of this algebra admits a $C^*$ norm 
and so a $C^*$ completion, and by using the sectional representation theorem \ref{thm:SecRep} this allows the 
construction of a $C^*$ bundle $\mathscr{E}(E, \sigma)$, the \textbf{DFR bundle}\footnote{Here the nomenclature 
is due to the famous paper by Doplicher, Fredenhagen and Roberts on Quantum Spacetime, \cite{DFR}, in which this 
construction is indirectly performed to obtain the algebra of Quantum Spacetime.} of $(E, \sigma)$, such that each 
fiber if isomorphic to the Heisenberg $C^*$ algebra $\mathscr{E}_{\sigma_m}$ associated to the poison vector space 
$(E_m, \sigma_m)$. 
The algebra of sections of this bundle is a perfect locally $C^*$ algebra, the \textbf{DFR Algebra}, denoted by 
$\mathcal{E}(E,\sigma)$

This construction provides a way to obtain nontrival examples of locally $C^*$ algebras and so also of noncommutative
spaces. In case the Poisson tensor is non-degenerate it is easy to show that the Heisenberg $C^*$ algebras are simple, 
so that the space of points for the corresponding DFR algebra is precisely the original base manifold.

\subsection{"Locally Covariant Quantum Spacetime"}\label{subsec:LCQST}

Fixed a natural number $n$ we consider the category of all Lorentzian manifolds of dimension $2n$, equipped with 
isometric embeddings . Let $\sigma_0$ be the bivector associated to the standard symplectic form in $\mathbb{R}^{2n}$. 
We consider it's orbit $\Sigma$ under the action of Lorentz group $O(2n-1, 1)$.

Now given a manifold $M$ we consider the fiber bundle $M \times_{O(M, g)} \Sigma$ associated to the orthogonal 
frame bundle $O(M, g)$ with the orbit $\Sigma$ for fiber, and denote it by $\Sigma(M)$. Over $\Sigma(M)$ we 
construct a vector bundle $E^M$ by taking the pull back of the tangent bundle of $M$ under the natural projection. 

The original nondegenerate bivector $\sigma_0$ induces then a natural nondegenerate bivector field, $\sigma^M$ 
over $E^M$, defined simply by $(\xi, \xi^\prime) \mapsto \sigma (\xi, \xi^\prime)$, for $\sigma$ in the fiber 
of $\Sigma(M)$ over a given point $m\in M$ and where, by a certain abuse, we denote by $\xi$ and $\xi^\prime$ 
both vectors in  $E^{M*}_\sigma$ and their images in $T^*_m M$.

We can then use the construction outlined in the previous subsection to obtain the DFR algebra, denoted by $\mathcal{E}_\Sigma(M)$, 
associated to $(E^M, \sigma^M)$.

Any isometric embedding $\psi: M \rightarrow N$ induces a embedding, which we denote by $\psi_\Sigma$, between 
$\Sigma(M)$ and $\Sigma(N)$. Denoting by $\psi_\Sigma^*f$ the pullback of a section of $\mathscr{S}(E^N)$ by this 
embedding and by $T\psi \cdot$ the action of the differential of the original $\psi$ on the appropriated associated bundle,
we get: 

\begin{eqnarray*} \label{eq:WMOY3}
 (\psi_\Sigma^*f \star_{\sigma^M} \psi_\Sigma^*g)(\sigma)(e)&=&\int_{E^{M*}_\sigma} d\xi~e^{i\langle\xi,e\rangle}
 \int_{E^{M*}_\sigma} d\eta~\check{(\psi_\Sigma^*f)}(\sigma)(\eta) \, \check{(\psi_\Sigma^*g)}(\sigma)(\xi-\eta) \;
 e^{\frac{i}{2} \sigma(\xi,\eta)} \\
 & = & \int_{E^{M*}_\sigma} d\xi~e^{i\langle T\psi \cdot\xi,  T\psi \cdot e\rangle} \int_{E^{M*}_\sigma} d\eta~\check{f}(T\psi 
 \cdot \sigma)(T\psi \cdot\eta)  \\ 
 & & \hspace{1cm} \cdot\: \check{g}(T\psi \cdot \sigma)(T\psi \cdot(\xi-\eta)) \;
 \exp\left(\frac{i}{2} T\psi \cdot\sigma(T\psi \cdot \xi, T\psi \cdot \eta)\right) \\
 & = & \int_{E^{N*}_{T\psi \cdot \sigma}} d\xi^\prime~e^{i\langle\xi^\prime, T\psi \cdot e\rangle}
 \int_{E^{N*}_{T\psi \cdot \sigma}} d\eta^\prime~\check{(f)}(T\psi \cdot\sigma)(\eta^\prime) \\
 & & \hspace{0.9cm} \cdot\: \check{(g)}(T\psi \cdot\sigma)(\xi^\prime-\eta^\prime) \;
 \exp \left( \frac{i}{2} T\psi \cdot\sigma(\xi^\prime,\eta^\prime)\right) \\
 & = & f \star_{\sigma^N} g (T\psi \cdot \sigma) (T\psi \cdot e) \\
 & = & \psi_\Sigma^* \left(f \star_{\sigma^N} g\right) (\sigma) (e)
\end{eqnarray*}
for every $\sigma \in \Sigma(M)$ and $e \in E^M_\sigma$. 

By considering the appropriate completions this shows that the pull back $\psi_\Sigma^*$ induces a surjective 
continuous $*$-homomorphism between the DFR algebras $\mathcal{E}_\Sigma(N)$ and $\mathcal{E}_\Sigma(M)$, 
showing that $M \mapsto \mathcal{E}_\Sigma(M))$ is actually a contravariant functor between Lorentzian manifolds 
and locally $C^*$ algebras.

Moreover, since each fiber is simple, the space of points of $\mathcal{E}_\Sigma (M)$ is nothing but the 
fiber bundle $\Sigma(M)$, and the projection to the base manifold acts as a natural transformation between
the composition $\pt\circ \mathcal{E}_\Sigma$ and the inclusion of our category of manifolds into the category 
of all locally compact spaces.

Now, due to the underling bundle structure, given a Lorentzian manifold $M$ the functor $U \subset M \mapsto 
\mathcal{E}_\Sigma (U)$ defines a sheaf of locally $C^*$ algebras and so a noncommutative space in our sense. 
For a point $m \in M$ the associated stalk is isomorphic to the algebra of sections of the trivial $C^*$ bundle 
$\Sigma \times \mathscr{E}_{\sigma_0} \rightarrow \Sigma$, where $\mathscr{E}_{\sigma_0}$ is the Heisenberg $C^*$ 
algebra associated to the bivector $\sigma_0$. 

Inspired by the fact, in four dimensions, these stalks are isomorphic to the quantum spacetime algebra defined in 
the original DFR paper, we call this noncommutative space the \textbf{Quantum Spacetime} associated to the Lorentzian 
manifold $M$.

The question if this construction has any relation to the physical motivation of the original construction in 
the DFR paper will to be tackled elsewhere. 

\subsection{The Noncommutative Space defined by a Net of $C^*$ Algebras}

Another class of examples which draw motivation from mathematical physics is provided by \cite{RV:NB}. 
In that paper Ruzzi and Vasselli constructed for any given net (precosheaf) of $C^*$ algebras 
over a good basis for the topology of a\ locally compact topological space $X$, a $C_0(X)$ algebra,
such that there is a natural transformation between the functors defining the original net and
the one defining the presheaf of local sections of the $C^*$ bundle associated to the $C_0(X)$ 
algebra by theorem \ref{thm:SecRep}.

To construct the aforementioned bundle one consider, for each point $x \in X$, the restriction of the 
original net to contractible open neighborhoods of $x$ and define the algebra $\mathcal{A}_x$ as the universal algebra associated to this net. In a sense, one can interpret the observables which can be localized on such a contractible region as quantities which can be measured from the given point, so 
that $\mathcal{A}_x$  is exactly the algebra of all such observables.

Using our methods we can replace the presheaf of $C^*$ algebras used in the original paper by an actual
sheaf of locally $C^*$ algebras, such that, in case the aforementioned universal algebras have trivial
centers, the topological information about the underling space $X$ can be recovered from the algebra of
global sections of that sheaf. 

We call this sheaf the noncommutative space defined by the net of $C^*$ algebras.

This provides a interesting interpretation of those fibers as the algebras of quantities which can be
measured in a topologically trivial neighborhood around each point, so that the local sections of the
associated bundle are "consistent" choices of available observables for each point in the open region.

This result provides a connection between the usual notion of algebraic quantum field theory and our
formalism. 
Unfortunately, in general, is quite hard to provide concrete examples for this connection, since in general,
the universal algebras mentioned above are fairly hard to compute.

\section{Outlook}\label{sec:End}

As pointed out in the introduction, the main goal of this work was providing a definition of noncommutative
spaces by a generalization of Gelfand duality. 

The next obvious step in the road to noncommutative geometry is obviously to understand what a
"noncommutative smooth manifold" should be. Fortunately the commutative realm provide some
interesting tips in regards to that. 
First of all a tentative definition for a "noncommutative topological manifold" is a perfect locally $C^*$
algebra for which the associated sheaf of locally $C^*$ algebras is locally equivalent to the sheaf of
local sections of a trivial $C^*$ bundle over some $\mathbb{R}^n$, a definition which reduces to the
usual one under the assumption of commutativity.

In the usual sense a differential structure can be seen as a choice of an algebra of smooth functions over
our base manifold. Not all choices of subalgebras are allowed, the requirement that the induced sheaf of
algebras must be locally equivalent to the sheaf of smooth functions over $\mathbb{R}^n$ imposes a
series of restrictions on the nature of such a subalgebra. The first and most obvious one is that it must 
be stable under the "smooth functional calculus", the composition of two smooth functions must be again 
a smooth function. A less obvious one, discussed in \cite{GBVF} is that of spectral invariance. 
Besides, it is also clear that such a subalgebra must admit only one locally $C^*$ completion, so that 
the topological structure of the underling space is determined uniquely by this subalgebra. 
It happens that all of this proprieties are incorporated in definition of differentiable structure in a 
(possibly noncommutative) $C^*$ algebra introduced by  Blackadar and Cuntz in \cite{BC:diff} and
and latter extended by Bhatt, Inoue and Ogi in \cite{BIO:diff}. 
It is our intent to generalize this notion to locally $C^*$ algebras and study its relation to our notion 
of noncommutative space.

A different and interesting direction is to better understand the notion of locally covariant spacetime introduced in section \ref{subsec:LCQST}. As we noted before, despite de aforementioned isomorphism between the stalks of the quantum spacetime sheaf and the original quantum spacetime algebras, 
it is still not clear if any of the physical motivation for the original construction can be carried to this
extension. 
It is our intent to investigate this question in the near future.

Yet another direction which deserves further attention  are the noncommutative spaces defined by a 
net of $C^*$ algebras.  
In a series of papers started with \cite{BDS:TD}, Benini, Dappiaggi and Schenkel show that in the 
general setting introduced by \cite{BFV:LCQFT} the usual requirement of injectiveness in AQFT may 
fail to hold in presence of topological defects of the underling spacetime. 
A way out proposed by Fredenhagen is to deal away with those difficulties by restricting local covariance
to topologically trivial regions and, in this case, the construction in \cite{RV:NB} shows that the 
information about those topological defects is encoded in some cohomology theory for the resulting
algebra. 
The extension presented here simplify the methods required for their construction, and point to a
interesting new direction. 
We hope that, given appropriate conditions, one can recover both the topological and causal information
about a spacetime from a locally $C^*$ algebra and a net of $C^*$ algebras over some sub-poset of 
its set of $C^*$ seminorms, such that there is a natural transformation between the net and the sheaf
associated to the original locally $C^*$ algebra.

\end{document}